\theoremstyle{plain} %% This is the default, anyway
\newtheorem{thm}{Theorem}[section]
\newtheorem{prop}[thm]{Proposition}
\theoremstyle{definition}
\theoremstyle{remark}
\newtheorem{rem}[thm]{Remark}
\title[Three term rational function progressions in finite fields]{Three term rational function progressions in finite fields}
\author{Guo-Dong Hong} 
\address{Department of Mathematics, California Institute of Technology, Pasadena, CA 91125, USA}
\email{ghong@caltech.edu}
\author{Zi Li Lim}
\address{Department of Mathematics, UCLA, Los Angeles, CA 90095, USA}
\email{zililim@math.ucla.edu}
\begin{document}

\maketitle

\begin{abstract}
Let $F(t),G(t)\in \mathbb{Q}(t)$ be rational functions such that $F(t),G(t)$ and the constant function $1$ are linearly independent over $\mathbb{Q}$, we prove an asymptotic formula for the number of the three term rational function progressions of the form $x,x+F(y),x+G(y)$ in subsets of $\mathbb{F}_p$. The main new ingredient is an algebraic geometry version of PET induction that bypasses Weyl's differencing. This answers a question of Bourgain and Chang.
\end{abstract}

\section{Introduction}\label{intro}
\subsection{Introduction}
Finding patterns in a dense set is a central theme of number theory and combinatorics. Let $\mathcal{A}$ be a discrete interval $[N]=\{1,2,...,N\}$ or a finite field, and $P_1,P_2,...,P_m\in \mathbb{Z}[t]$ be polynomials, define $r_{P_1,P_2,...,P_m}(\mathcal{A})$ to be the size of the largest subset of $\mathcal{A}$ that does not contain any nontrivial progression of the form
\begin{equation}
    x,x+P_1(y),x+P_2(y),...,x+P_m(y).
\end{equation}
Szemer\'{e}di \cite{Szemerédi1975} proved that any subset of the integers of positive upper density contains an arbitrarily long nontrivial arithmetic progression, or equivalently,
\begin{equation}
    r_{t,2t,...,(k-1)t}([N])=o_{k}(N)
\end{equation}
for all $k\geq 2$ via combinatorial arguments.

Bergelson and Leibman \cite{MR1325795} generalized Szemer\'{e}di's theorem to polynomial progressions. Let $P_1,P_2,...,P_m\in \mathbb{Z}[t]$ be polynomials with zero constant terms, Bergelson and Leibman proved that 
\begin{equation}
    r_{P_1,P_2,...P_m}([N])=o_{P_1,P_2,...,P_m}(N)
\end{equation}
via ergodic theory. The zero constant term assumption is a natural and convenient condition to avoid local obstructions.

Both Szemer\'{e}di's and Bergelson-Leibman's proofs are qualitative in nature. For quantitative bounds, before Szemer\'{e}di, Roth \cite{https://doi.org/10.1112/jlms/s1-28.1.104} already showed that 
\begin{equation}
    r_{t,2t}([N])=O\Bigl(\frac{N}{\log\log N}\Bigr)
\end{equation}
via Fourier analysis, but his arguments do not work for finding longer arithmetic progressions. In Gowers' celebrated papers \cites{MR1844079,MR1631259}, he obtained quantitative bounds for the sets that do not contain arbitrarily long arithmetic progressions, more precisely, for all $k\geq 2$, there exists a constant $c_{k}>0$ such that  
\begin{equation}
    r_{t,2t,...,(k-1)t}([N])=O_{k}\Bigl(\frac{N}{(\log\log N)^{c_k}}\Bigr)
\end{equation}
via introducing higher order Fourier analysis. For quantitative bounds for general polynomial progressions, the general theory is not complete yet, but see Prendiville's \cite{Prendiville2017Quantitative}, Peluse and Prendiville's \cite{peluse2022quantitative}, Peluse's \cite{peluse_2020}, Peluse and Prendiville's \cite{peluse2021polylogarithmic}, Bloom and Maynard's \cite{Bloom_Maynard_2022} and Peluse, Sah and Sawhney's \cite{peluse2023effective} results for current state of the art.

Now, we turn our attention to finding progressions in the finite fields, where new phenomena emerge. Here and throughout, let $p$ be a prime and $\mathbb{F}_p$ be the finite field with $p$ elements. One has the bound $r_{P_1,P_2,...,P_m}(\mathbb{F}_p)\leq r_{P_1,P_2,...,P_m}([p])$ \textit{a priori}, but a much stronger bound is valid in many cases. The case of two term polynomial progressions in finite fields is more straightforward and can be handled by a Fourier analytic calculation and Weil's bound. Hence, the first nontrivial polynomial progression in finite fields is the three term progression $x,x+y,x+y^2$. Bourgain and Chang \cite{MR3704938} proved that $r_{t,t^2}(\mathbb{F}_p)\ll p^{1-1/15}$. In fact, Bourgain and Chang proved an asymptotic formula: if $A$ is a subset of $\mathbb{F}_p$, then
\begin{equation}
    \#\{(x,y)\in \mathbb{F}_p^2: x,x+y,x+y^2\in A\}=\frac{|A|^3}{p}+O\bigl(p^{2/5}|A|^{3/2}\bigr).
\end{equation}
In the same paper, Bourgain and Chang posed the following three questions:
\begin{enumerate}
    \item Let $P_1,P_2\in \mathbb{Z}[t]$ be linearly independent polynomials with zero constant terms, is an asymptotic formula for the number of the progressions $x,x+P_1(y),x+P_2(y)$ in subsets of $\mathbb{F}_p$ valid?
    \item More generally, let $P_1,P_2,...,P_m\in \mathbb{Z}[t]$ be linearly independent polynomials with zero constant terms, is an asymptotic formula for the number of the progressions $x,x+P_1(y),x+P_2(y),...,x+P_m(y)$ in subsets of $\mathbb{F}_p$ valid?
    \item In a different direction, can one replace the polynomials $P_1,P_2$ in the first question $(1)$ by rational functions?
\end{enumerate}

The first question $(1)$ is answered by Peluse \cite{MR3874848} and independently by Dong, Li and Sawin \cite{MR4179774}. Peluse's theorem states that, let $P_1,P_2\in \mathbb{Z}[t]$ be polynomials with zero constant terms such that $P_1,P_2$ are linearly independent over $\mathbb{Q}$, then
\begin{equation}
    \#\{(x,y)\in \mathbb{F}_p^2: x,x+P_1(y),x+P_2(y)\in A\}=\frac{|A|^3}{p}+O_{P_1,P_2}\bigl(p^{7/16}|A|^{3/2}\bigr).
\end{equation}
for all subsets $A$ of $\mathbb{F}_p$. This implies the bound $r_{P_1,P_2}(\mathbb{F}_p)\ll_{P_1,P_2} p^{1-1/24}$. Independently, Dong, Li and Sawin proved a similar result with the error term $O_{P_1,P_2}\bigl(p^{3/8}|A|^{3/2}\bigr)$, implying $r_{P_1,P_2}(\mathbb{F}_p)\ll_{P_1,P_2} p^{1-1/12}$.  As a side note, the linear independence assumption for $P_1,P_2$ is crucial, since one cannot hope for power saving bounds for progressions involving linearly dependent polynomials, even the simplest case $x,x+y,x+2y$. Both Peluse's and Dong-Li-Sawin's methods applied deep results from algebraic geometry; Peluse's method used Deligne's bound while Dong-Li-Sawin's method used the Deligne-Katz theory.

The second question $(2)$ is also answered by Peluse \cite{MR3934588}. By introducing degree lowering method, Peluse proved that, let $P_1,P_2,...,P_m\in \mathbb{Z}[t]$ be polynomial with zero constant terms such that $P_1,P_2,...,P_m$ are linearly independent over $\mathbb{Q}$, there exists a constant $\gamma>0$ depending on $P_1,P_2,...,P_m$ such that
\begin{equation}
    \#\{(x,y)\in \mathbb{F}_p^2: x,x+P_1(y),x+P_2(y),...,x+P_m(y)\in A\}=\frac{|A|^{m+1}}{p^{m-1}}+O_{P_1,P_2,...,P_m}\bigl(p^{2-(m+1)\gamma}\bigr)
\end{equation}
for all subsets $A$ of $\mathbb{F}_p$. This implies the bound $r_{P_1,P_2,...,P_m}(\mathbb{F}_p)\ll_{P_1,P_2,...,P_m} p^{1-\gamma}$. The degree lowering method also provides a great simplification of the solution to the first question $(1)$ of Bourgain and Chang. Moreover, the degree lowering method only relies on Weil's bound for curves, as opposed to the heavy algebraic geometry machinery utilized in the methods \cites{MR3874848,MR4179774} for the three term polynomial progressions. 

For the third question $(3)$ of Bourgain and Chang, replacing the polynomials by rational functions presents new phenomena and difficulties. Unlike the case of polynomials, there is no obvious analogue of rational function progressions in integer setting, hence there is no any general bound \textit{a priori}, not even a qualitative one. The only known example is the special progression $x,x+y,x+1/y$ considered by Bourgain and Chang \cite{MR3704938}, but their proof relies on the arithmetic geometry theorem that is specific to Kloosterman sums \cite{MR3338119}. All the previous methods \cite{MR3704938,MR3874848,MR4179774,MR3934588} do not apply to the case of rational function progressions.   

In this paper, we answered this question affirmatively. To state our results, we introduce some notation first. We use $\mathbb{E}^*_{x,y}$ to denote averaging in $x,y$ variables excluding the poles of the (rational) functions being averaged. For example, if $F(t),G(t)\in \mathbb{Q}(t)$ are rational functions and $f_0,f_1,f_2:\mathbb{F}_p\longrightarrow\mathbb{C}$ are functions, then
\begin{equation*}
\mathbb{E}^*_{x,y}f_0(x)f_1(x+F(y))f_2(x+G(y))=\frac{1}{p^2}\sum_{x,y\in \mathbb{F}_p \;\text{and}\; y\neq \text{poles}}f_0(x)f_1(x+F(y))f_2(x+G(y)).
\end{equation*}
The notation $\lVert \cdot \rVert_2$ is the $L^2$-norm with respect to the uniform probability measure on $\mathbb{F}_p$, that is, $\lVert f \rVert_2=\bigl(\frac{1}{p}\sum_{x\in \mathbb{F}_p} |f(x)|^2\bigr)^{1/2}$. Our main theorem is an asymptotic formula for the counting operator for three term rational function progressions.

\begin{thm}\label{main}
    Let $F(t), G(t)\in \mathbb{Q}(t)$ be rational functions over $\mathbb{Q}$ such that $F(t), G(t)$ and the constant function $1$ are linearly independent over $\mathbb{Q}$. Then, we have the asymptotic formula
    \[
        \mathbb{E}^*_{x,y} f_0(x)f_1(x+F(y))f_2(x+G(y))=\prod_{i=0}^2\mathbb{E}_z f_i(z)+O_{F,G}\bigl(p^{-1/10}\lVert f_0 \rVert_2 \lVert f_1 \rVert_2 \lVert f_2 \rVert_2\bigr)
    \]
    for all functions $f_0,f_1,f_2:\mathbb{F}_p\longrightarrow \mathbb{C}$.  
\end{thm}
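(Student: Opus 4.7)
The plan is to adapt the strategy from the polynomial case (Bourgain--Chang \cite{MR3704938}, Peluse \cite{MR3874848, MR3934588}), with the PET/Weyl differencing step replaced by an argument grounded in algebraic geometry. I would begin with the standard mean-zero decomposition: write $f_i = \mathbb{E} f_i + g_i$ with $\mathbb{E} g_i = 0$, and expand the trilinear form. The constant contribution yields $\prod_{i} \mathbb{E} f_i$, and the remaining seven summands each contain a mean-zero factor, so it suffices to prove
\[
\bigl|\mathbb{E}^*_{x,y} f_0(x) f_1(x+F(y)) f_2(x+G(y))\bigr| \ll_{F,G} p^{-1/10} \prod_{i=0}^2 \|f_i\|_2
\]
under the hypothesis that some $\mathbb{E} f_i = 0$. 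Via the changes of variable $x \mapsto x - F(y)$ and $x \mapsto x - G(y)$, the pair $(F,G)$ is replaced by $(-F, G-F)$ or $(-G, F-G)$; both replacements preserve the linear independence of $\{F, G, 1\}$, so the three mean-zero cases are equivalent and one may assume $\mathbb{E} f_0 = 0$.

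The core step is to control the trilinear form by a Fourier coefficient of $f_0$. I would apply Cauchy--Schwarz in $x$ to remove $f_0$ at the cost of $\|f_0\|_2$ and an auxiliary variable $y'$, then Fourier-expand one of the remaining $f_i$ so that the inner sum takes the form $\sum_y e_p(\xi F(y) + \eta G(y) + \cdots)$ over $\mathbb{F}_p$ minus poles. In polynomial PET one would now Weyl-difference $y \mapsto y+h$ to strictly lower the degrees, but $F(y+h)-F(y)$ admits no analogous simplification for rational $F$. Instead, view the inner sum directly as an exponential sum along the affine curve parametrized by $(F,G)$. The linear independence of $F(t), G(t), 1$ over $\mathbb{Q}$ translates, by specialization for all sufficiently large $p$, into the statement that the phase $\xi F(t) + \eta G(t) + \lambda$ is nonconstant in $\mathbb{F}_p(t)$ for every nonzero frequency $(\xi, \eta, \lambda)$. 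Weil's bound then supplies square-root cancellation $O(\sqrt{p})$, and combining this with Parseval for $f_0$ (whose vanishing mean forces a nonzero frequency to dominate) yields the $p^{-1/10}$ saving after balancing the losses from each Cauchy--Schwarz.

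The principal obstacle I foresee is precisely this algebraic-geometric replacement of PET. In the polynomial case PET is an inductive descent on degree driven by Weyl differencing, so one only ever invokes Weil's bound at the linear base case, where it is essentially trivial. With rational functions there is no degree invariant to descend on, and so the algebraic geometry must be engaged at the full complexity of $(F,G)$ in a single pass. The technical demands are threefold: one must (i) track the poles of all shifted copies of $F$ and $G$ so that the resulting curve in $\mathbb{A}^2_{\mathbb{F}_p}$ is well-defined and the exclusion in $\mathbb{E}^*$ is accounted for, (ii) verify geometric irreducibility or bounded complexity of this curve for all sufficiently large $p$, and (iii) arrange the Cauchy--Schwarz and Fourier steps so that the phase can become constant only through a $\mathbb{Q}$-linear relation among $F, G, 1$, thereby contradicting the hypothesis and supplying the needed square-root cancellation. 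Making these three ingredients fit together is the delicate heart of the argument.
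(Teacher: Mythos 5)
Your proposal misses the central difficulty and would fail at the first substantive estimate. After a single Cauchy--Schwarz in $x$ (losing $f_0$) and full Fourier expansion of the remaining factors, you arrive at a sum of the shape
\begin{equation*}
\sum_{\substack{n_1,n_2,m_1,m_2 \\ n_1+n_2=m_1+m_2}} \widehat{f_1}(n_1)\overline{\widehat{f_1}}(m_1)\widehat{f_2}(n_2)\overline{\widehat{f_2}}(m_2)\,K(n_1,n_2)\overline{K}(m_1,m_2),\qquad K(n_1,n_2)=\mathbb{E}^*_y e_p(n_1F(y)+n_2G(y)).
\end{equation*}
Bombieri/Weil gives $|K(n_1,n_2)|\ll p^{-1/2}$ for $(n_1,n_2)\neq(0,0)$, but that is not enough: the sum over the roughly $p^3$ off-diagonal frequencies with $(n_1,n_2),(m_1,m_2)\neq(0,0)$ already contributes $\gg \|f_1\|_2^2\|f_2\|_2^2$ with no power saving (the weights $|\widehat{f_1}|*|\widehat{f_2}|$ in $\ell^2$ soak up the $p^{-1}$). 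This is exactly why the polynomial case is not trivial. Moreover, you reduce to $\mathbb{E}f_0=0$ and then Cauchy--Schwarz the factor $f_0$ away, so the mean-zero information is discarded before it is used; a mean-zero hypothesis would need to sit on a factor that survives the Cauchy--Schwarz.

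The paper's route is genuinely different and is built to overcome precisely this. First, Theorem \ref{PET} performs \emph{three} Cauchy--Schwarz applications in Fourier space, arriving at a $U^2$-Gowers norm control
\[
|\Lambda_{F,G}(f_0,f_1,f_2)|\le \|f_0\|_2^{1/2}\|\widehat{f_0}\|_{l^4}^{1/2}\|f_1\|_2\|f_2\|_2\Bigl(\tfrac1{p^3}|Y(\mathbb{F}_p)|\Bigr)^{1/8},
\]
where $Y$ is the eight-variable \emph{Roth variety} \eqref{eqs: Roth1}--\eqref{eqs: Roth5}. At this stage no Weil-type square-root cancellation is invoked at all; the entire burden is shifted to the point-counting estimate $|Y(\mathbb{F}_p)|\ll p^3$ (Theorem \ref{dim}). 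That estimate is where the real work lies: the Roth variety is singular and must be decomposed into a generic piece $Y_{\mathrm{gen}}$ (handled by the implicit function theorem over $\mathbb{C}$ plus spreading out and Lang--Weil), lower-dimensional loci $Y_{\mathrm{low}},Z_{\mathrm{low}}$ (handled by trivial fibering), a good part $Z_{\mathrm{good}}$ of a special subvariety (again Lang--Weil on an auxiliary surface $X$), and a bad part $Z_{\mathrm{bad}}$ which is controlled only after showing via an integrating-factor argument that the constraint $\widetilde{E}(y_1,y_4)=0$ is non-degenerate. None of this casework appears in your sketch. Second, the paper converts the $U^2$ control into a power-saving asymptotic not with a single Weil bound but with Peluse-style degree lowering: a Fourier level-set decomposition $f_0=\check g+\check h$, a dual-function estimate (Propositions \ref{dual}, \ref{dual2}, using Bombieri's bound at the two-term level) for the large-spectrum part $\check g$, and the PET bound for the small-spectrum part $\check h$, optimized at $\varepsilon=p^{-1/10}$. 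In short, your plan correctly identifies that rational functions kill Weyl differencing and that linear independence of $F,G,1$ should yield nonconstant phases, but the single Cauchy--Schwarz-plus-Weil strategy does not close, and the multi-step PET in Fourier space together with the Roth variety dimension bound is the essential missing machinery.
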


\begin{rem}
    In this paper we will frequently need to evaluate a $\mathbb{Q}$-coefficients rational function over $\mathbb{F}_p$. Here, we use the natural interpretation. Let $F(t)\in \mathbb{Q}(t)$ be a rational function and write $F(t)=a(t)/b(t)$ with $a(t),b(t)\in \mathbb{Z}[t]$ and $\gcd(a(t),b(t))=1$. For any $y\in \mathbb{F}_p$ such that $b(y)\neq 0$, define $F(y)=a(y)\cdot \bigl(b(y)\bigr)^{-1}$ where $\bigl(b(y)\bigr)^{-1}$ is the multiplicative inverse of $b(y)$ in $\mathbb{F}_p^{\times}$.
\end{rem}

\begin{rem}
    As already observed in the case of polynomial progressions, the linear independence assumption is crucial. The linear independence assumption for $F(t),G(t)$ and the constant function $1$ is reasonably natural; it includes the case that $F,G$ are linearly independent polynomials with zero constant terms. It also includes the formulation of this question in Peluse's survey \cite[Question 3.13]{peluse2023finite}.
\end{rem}

Let $f_0,f_1,f_2$ in Theorem \ref{main} be the characteristic function of a subset $A$ of $\mathbb{F}_p$, we deduce that
\begin{equation}
\begin{split}
    &\#\{(x,y)\in \mathbb{F}_p^2: y\;\text{is not a pole of}\;F\;\text{or}\;G\;\text{and}\;x,x+F(y),x+G(y)\in A\}\\
    &=\frac{|A|^3}{p}+O_{F,G}\bigl(p^{2/5}|A|^{3/2}\bigr),
\end{split}    
\end{equation}
hence we conclude the following theorem.

\begin{thm}\label{main2}
    Let $F(t), G(t)\in \mathbb{Q}(t)$ be rational functions over $\mathbb{Q}$ such that $F(t), G(t)$ and the constant function $1$ are linearly independent over $\mathbb{Q}$. There exists a constant $C_{F,G}>0$ such that if $A$ is a subset of $\mathbb{F}_p$ of size $|A|\geq C_{F,G}\;p^{1-1/15}$, then
    \[
    \#\lbrace(x,y)\in \mathbb{F}_p^2:y\;\text{is not a pole of}\;F\;\text{or}\;G\;\text{and}\;x,x+F(y),x+G(y)\in A\rbrace \gg_{F,G} \frac{|A|^3}{p}.
    \]
\end{thm}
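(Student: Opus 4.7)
\textbf{Proof plan for Theorem \ref{main2}.} This is a direct corollary of Theorem \ref{main}, obtained by specialising the three functions to the indicator $\mathbf{1}_A$ and choosing the density threshold so that the main term beats the error. The plan is simply to unpack the definitions of $\mathbb{E}^*$ and $\lVert\cdot\rVert_2$, multiply through by $p^2$ to convert a normalised count to a raw count, and then impose the inequality ``main term $\geq$ 2 $\times$ error term'' to read off the required lower bound on $|A|$.

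Concretely, first I would set $f_0=f_1=f_2=\mathbf{1}_A$ in Theorem \ref{main}. The main term becomes $\prod_{i=0}^2 \mathbb{E}_z \mathbf{1}_A(z)=(|A|/p)^3$, and the $L^2$ factor is $\lVert \mathbf{1}_A\rVert_2^3=(|A|/p)^{3/2}$, so Theorem \ref{main} yields
\[
\mathbb{E}^*_{x,y}\mathbf{1}_A(x)\mathbf{1}_A(x+F(y))\mathbf{1}_A(x+G(y))=\frac{|A|^3}{p^3}+O_{F,G}\!\Bigl(p^{-1/10}\Bigl(\tfrac{|A|}{p}\Bigr)^{3/2}\Bigr).
\]
Next I would multiply both sides by $p^2$. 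By the definition of $\mathbb{E}^*_{x,y}$, the left-hand side becomes precisely the number of pairs $(x,y)\in\mathbb{F}_p^2$ with $y$ not a pole of $F$ or $G$ and $x,x+F(y),x+G(y)\in A$. The main term turns into $|A|^3/p$, and since $p^{-1/10-3/2}\cdot p^{2}=p^{2/5}$ the error term turns into $O_{F,G}(p^{2/5}|A|^{3/2})$.

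Finally, I would choose $C_{F,G}$ so that the main term dominates the error by at least a factor of $2$. The inequality
\[
\tfrac{1}{2}\cdot\frac{|A|^3}{p}\;\geq\;C'_{F,G}\,p^{2/5}|A|^{3/2}
\]
is equivalent to $|A|^{3/2}\geq 2C'_{F,G}\,p^{7/5}$, i.e.\ $|A|\geq (2C'_{F,G})^{2/3}\,p^{14/15}$. Since $14/15=1-1/15$, taking $C_{F,G}:=(2C'_{F,G})^{2/3}$ gives the hypothesis stated in the theorem, and the conclusion $\#\{\ldots\}\gg_{F,G}|A|^3/p$ follows. There is no real obstacle here; the only thing to be careful about is that the averaging $\mathbb{E}^*_{x,y}$ is divided by $p^2$ (not by the smaller true count of admissible $y$), so that multiplication by $p^2$ gives the raw count exactly and the factor $p^{2/5}$ in the error is obtained without having to account for the finitely many excluded poles.
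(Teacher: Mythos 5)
Your proposal matches the paper's own deduction exactly: specialise Theorem \ref{main} to $f_0=f_1=f_2=\mathbf{1}_A$, multiply through by $p^2$ to get $|A|^3/p+O_{F,G}(p^{2/5}|A|^{3/2})$, and observe that the main term dominates once $|A|\gg p^{1-1/15}$. The arithmetic and the choice of threshold are both correct, and no step is missing.
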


This implies the bound $r_{F,G}(\mathbb{F}_p)\ll_{F,G}p^{1-1/15}$. Our approach to Theorem \ref{main} also provides a new solution to the first question $(1)$ of Bourgain and Chang. Compared to previous methods, the exponent in the bound $r_{F,G}(\mathbb{F}_p)\ll_{F,G}p^{1-1/15}$ is slightly better than the exponent $1-1/24$ in Peluse's method, but slightly worse than the exponent $1-1/12$ in Dong-Li-Sawin's method. Moreover, our approach has less dependence on algebraic geometry, hence for the special progressions $x,x+y,x+y^2$ and $x,x+y,x+1/y$, our proof would be an ``elementary" algebra proof.

The main new difficulty for the rational function progression problem is that differentiating a rational function does not reduce its complexity. If one wishes to apply the degree lowering method \cite{MR3934588}, then one requires Gowers norm control for the counting operator \textit{a priori}. However, Gowers norm control is done by PET induction via Weyl's differencing, which is unhelpful for rational functions. Applying Peluse's previous method for three term polynomial progressions \cite{MR3874848} or Dong-Li-Sawin's method \cite{MR4179774} also does not work, since both methods would reduce the problem to estimating multidimensional exponential sums on varieties with rational function phases, which is highly nontrivial and unknown.

The main novelty in this paper is the development of an algebraic geometry version of PET induction that bypasses Weyl's differencing. If we wanted to prove the power saving bounds directly, then we would need to estimate multidimensional exponential sums on varieties with rational function phases, which is out of reach of the current arithmetic geometry methods. However, if we only want to obtain the Gowers norm control, then it suffices to estimate the dimension of certain varieties, which is in the scope of classical algebraic geometry. We would decompose the varieties of interest into generic parts and special subvarieties. The generic parts can be dealt with complex analytic techniques, while the special subvarieties, which have extra constraints and special structures, can be handled by mod $p$ point counting. After establishing Gowers norm control, we can immediately run the degree lowering arguments. In fact, since we obtain a $U^2$-Gowers norm control, we could perform a Fourier level set decomposition of the functions to run the degree lowering arguments more efficiently, as opposed to using the regularity lemma.

A natural problem is to generalize algebraic geometry PET induction to longer rational function progressions, which is a subject we intend to return in the future. Given that all the previous methods generalize to finite fields $\mathbb{F}_q$ that may not have prime cardinality, we believe our approach also extend to all finite fields. Furthermore, it is natural to wonder about the quantitative bounds for polynomial progressions involving linearly dependent polynomials. This problem is still widely open; see Kuca's work \cite{kuca2020bounds} and Leng's works \cites{leng2022quantitative,leng2023partition} for current status.

This paper is organized as follow. In Section \ref{sec: convention}, we define the notation used in this paper. The Section \ref{sec: PET} is about the framework of the algebraic geometry PET induction and the Section \ref{sec: dim} is about the key dimension estimates required for the algebraic geometry PET induction. In Section \ref{sec: final}, we put everything together to finish the proof. The main new ideas of this paper are in Sections \ref{sec: PET} and \ref{sec: dim}.

\subsection{Acknowledgments}
The first author would like to thank his advisor David Conlon for his support and discussions. The second author would like to thank his advisor Terence Tao for inspirations, advice and support. The second author also thanks James Leng for helpful discussions.

\section{Convention}\label{sec: convention}

\subsection{Notation}
Throughout, let $p$ be a prime and $\mathbb{F}_p$ be the finite field with $p$ elements. For any finite set $\mathcal{A}$ and function $f:\mathcal{A}\longrightarrow\mathbb{C}$, let $\mathbb{E}_{x\in\mathcal{A}}f(x)=\frac{1}{|\mathcal{A}|}\sum_{x\in \mathcal{A}}f(x)$ be the average of $f$ over $\mathcal{A}$. When the summation variables are in $\mathbb{F}_p$, we would often omit the subscript $\mathbb{F}_p$ in the summation notation and write $\mathbb{E}_{x}$ instead. We use $\mathbb{E}^*$ to denote averaging excluding the poles of the (rational) functions being averaged. For example, if $F(t),G(t)\in \mathbb{Q}(t)$ are rational functions and $f_0,f_1,f_2:\mathbb{F}_p\longrightarrow\mathbb{C}$ are functions, then
\begin{equation*}
\mathbb{E}^*_{x,y}f_0(x)f_1(x+F(y))f_2(x+G(y))=\frac{1}{p^2}\sum_{x,y\in \mathbb{F}_p \;\text{and}\; y\neq \text{poles}}f_0(x)f_1(x+F(y))f_2(x+G(y)).
\end{equation*}
Similarly, we would denote the summation excluding the poles by $\sum^*$. Furthermore, we would often omit the subscript $F,G$ in Vinogradov's notation $\ll,\gg$ and Landau's notation $O,o$ since the implied constants would usually depend on the rational functions $F,G$.

\subsection{Fourier Analysis Notation}
For any function $f:\mathbb{F}_p\longrightarrow \mathbb{C}$, denote the $L^r$-norm with respect to the uniform probability measure on $\mathbb{F}_p$ by $\lVert f \rVert_r=\bigl(\frac{1}{p}\sum_{x}\lvert f(x)\rvert^r\bigr)^{1/r}$ and the $l^r$-norm with respect to the counting measure on $\mathbb{F}_p$ by $\lVert f \rVert_{l^r}=\bigl(\sum_{x}\lvert f(x)\rvert^r\bigr)^{1/r}$. Denote $e_p(x)=e^{2\pi i x/p}$ and Fourier transform $\widehat{f}(\xi)=\mathbb{E}_{x} f(x)e_p(-x\xi)$. In this convention, Parseval's identity reads $\lVert f \rVert_2=\lVert \widehat{f} \rVert_{l^2}$ and Fourier inversion reads $f(x)=\sum_{\xi}\widehat{f}(\xi)e_p(\xi x)$.

\subsection{Algebraic Geometry Terminology} By (classical) varieties, we mean any classical affine, quasi-affine, projective or quasi-projective varieties. We do not require the varieties to be irreducible, hence our affine varieties and projective varieties may be algebraic sets in some references. By abuse of terminology, we also refer to schemes that are separated and of finite type over $\mathrm{Spec}\;k$ for some field $k$ as a variety. We would often use $Y(\mathbb{C}),Z(\mathbb{F}_p),...$ to denote classical varieties and $Y,Z,...$ to denote schemes. If $X$ is a scheme over some field $k$, we also write $X(k)$ as $k$-valued points $\mathrm{Hom}_{k}(\mathrm{Spec}\;k,X)$. 

If $X$ is a variety or scheme, denote the dimension of $X$ as a variety (or scheme) by $\dim_{\mathrm{Var}}X$ (or $\dim_{\mathrm{Sch}}X$), which is the supremum of the length of the chains of irreducible closed subsets in the underlying topological space.

\section{Algebraic Geometry PET Induction}\label{sec: PET}

In this section, we establish the framework of algebraic geometry PET induction, bounding the counting operator for three term rational function progressions in terms of $U^2$-Gowers norm (equivalently, the $l^4$-norm of the Fourier transform) and the size of the $\mathbb{F}_p$-points of certain varieties. This allows us to bypass Weyl's differencing, which is unhelpful for rational functions, to obtain Gowers norm control. 

Let $F(t),G(t)\in \mathbb{Q}(t)$ be rational functions over $\mathbb{Q}$. Define the \emph{Roth variety} associated to $F(t),G(t)$ to be the (quasi-affine) variety cut out by the following system of equations in eight variables $y_1,y_2,y_3,y_4,y_5,y_6,y_7,y_8$.

\begin{gather}
    F(y_1)-G(y_1)-F(y_2)+G(y_2)-F(y_3)+G(y_3)+F(y_4)-G(y_4)=0\label{eqs: Roth1}\\
    F(y_5)-G(y_5)-F(y_6)+G(y_6)-F(y_7)+G(y_7)+F(y_8)-G(y_8)=0\label{eqs: Roth2}\\
    F(y_1)-F(y_3)-F(y_5)+F(y_7)=0\label{eqs: Roth3}\\
    F(y_2)-F(y_4)-F(y_6)+F(y_8)=0\label{eqs: Roth4}\\
    G(y_3)-G(y_4)-G(y_7)+G(y_8)=0\label{eqs: Roth5}
\end{gather}
The Roth variety could be regarded as a classical variety or a scheme over various fields, for example, the complex numbers $\mathbb{C}$ or finite fields $\mathbb{F}_p$. 

\begin{thm}\label{PET}
    Let $F(t),G(t)\in \mathbb{Q}(t)$ be rational functions over $\mathbb{Q}$ and $Y(\mathbb{F}_p)$ be $\mathbb{F}_p$-points of the Roth variety associated to $F(t),G(t)$. For all functions $f_0,f_1,f_2:\mathbb{F}_p \longrightarrow \mathbb{C}$, we have the following $U^2$-Gowers norm control:
    \[\lvert\mathbb{E}^*_{x,y} f_0(x)f_1(x+F(y))f_2(x+G(y))\rvert \leq \lVert f_0 \rVert_2^{1/2} \lVert \widehat{f_0} \rVert_{l^4}^{1/2} \lVert f_1 \rVert_2 \lVert f_2 \rVert_2\left(\frac{1}{p^3} |Y(\mathbb{F}_p)| \right)^{1/8}.\]
\end{thm}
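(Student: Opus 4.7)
The plan is to unfold $\Lambda$ by three successive Cauchy--Schwarz inequalities in $x$, each doubling the number of $y$-variables (from one to two to four to eight), followed by a Fourier-analytic step. The key point---the reason this bypasses Weyl differencing---is that three applications of Cauchy--Schwarz leave a correlation of eight shifts of $f_0$ and four shifts of $f_1$ whose support in $(y_1,\ldots,y_8)$ is governed by the algebraic constraints defining the Roth variety.

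First shift $x\mapsto x-G(y)$ and write $H(y):=G(y)-F(y)$, so that $\Lambda=\mathbb{E}^*_{x,y}f_2(x)f_0(x-G(y))f_1(x-H(y))$. A Cauchy--Schwarz in $x$ peels off $f_2$, giving $|\Lambda|^2\leq \lVert f_2\rVert_2^2\,\lVert U\rVert_2^2$ with $U(x):=\mathbb{E}^*_y f_0(x-G(y))f_1(x-H(y))$. Expanding $\lVert U\rVert_2^2$ and shifting $x\mapsto x+H(y_1)$ to place $f_1$ at the origin, a second Cauchy--Schwarz peels off $f_1$, giving $|\Lambda|^4\leq \lVert f_2\rVert_2^4\,\lVert f_1\rVert_2^2\,\lVert V\rVert_2^2$ with $V(x)=\mathbb{E}^*_{y_1,y_2}f_0(x-F(y_1))\,\overline{f_0(x+H(y_1)-G(y_2))}\,\overline{f_1(x+H(y_1)-H(y_2))}$. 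Finally, H\"older gives $\lVert V\rVert_2^4\leq\lVert V\rVert_4^4=\mathbb{E}_x|V(x)|^4$, and expanding $|V|^4=V^2\overline{V}^2$ introduces the remaining variables $y_3,\ldots,y_8$, yielding
\[
|\Lambda|^8\leq\lVert f_2\rVert_2^8\,\lVert f_1\rVert_2^4\cdot \mathbb{E}_x\mathbb{E}^*_{y_1,\ldots,y_8}\Bigl[\prod_{i=1}^8 f_0^{\epsilon_i}(x+a_i)\Bigr]\Bigl[\prod_{j=1}^4 f_1^{\epsilon'_j}(x+b_j)\Bigr],
\]
where the $a_i$ are of the form $-F(y_\ell)$ or $H(y_\ell)-G(y_{\ell'})$ and the $b_j=H(y_{2j-1})-H(y_{2j})$ (with signs dictated by the $W\overline{W}$-pattern).

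Next, Fourier-expand the eight $f_0$'s and four $f_1$'s: $x$-integration imposes one linear relation on the twelve Fourier frequencies, and the remaining $(y_1,\ldots,y_8)$-average gives an exponential sum whose phase $\Phi$ is linear in the values $F(y_\ell),G(y_\ell)$. Equating the coefficient of each $F(y_\ell)$ and $G(y_\ell)$ in $\Phi$ to zero gives linear conditions on the frequencies whose elimination leaves exactly the five Roth variety equations \eqref{eqs: Roth1}--\eqref{eqs: Roth5}: the $H$-shifts in the $b_j$'s produce equations (1)--(2), the $F$-shifts on $\{y_1,y_3,y_5,y_7\}$ and $\{y_2,y_4,y_6,y_8\}$ produce (3)--(4), and the $G$-shifts on $\{y_3,y_4,y_7,y_8\}$ produce (5). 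Bounding the $f_1$-Fourier sum by Parseval contributes $\lVert f_1\rVert_2^4$ (yielding the required $\lVert f_1\rVert_2^8$), while H\"older applied to the $f_0$-Fourier sum---interpolating four Parseval pairs ($\lVert\widehat{f_0}\rVert_{l^2}=\lVert f_0\rVert_2$) with four $l^4$-quadruples ($\lVert\widehat{f_0}\rVert_{l^4}=\lVert f_0\rVert_{U^2}$)---contributes $\lVert f_0\rVert_2^4\lVert\widehat{f_0}\rVert_{l^4}^4$; and the count of $(y_1,\ldots,y_8)$ satisfying (1)--(5) supplies the factor $|Y(\mathbb{F}_p)|/p^3$ after normalization.

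The hard part will be the delicate bookkeeping of the shifts $a_i$ and $b_j$ through all three Cauchy--Schwarz iterations and the subsequent Fourier step, showing that the phase-coefficient vanishing conditions decompose cleanly into exactly the five algebraically independent equations cutting out the Roth variety; the pole exclusions implicit in $\mathbb{E}^*$ must also be carefully propagated through each shift so that they are absorbed into negligible error terms.
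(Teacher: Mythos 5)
Your three physical-space Cauchy--Schwarz iterations (peeling $f_2$, then one copy of $f_1$, then passing to $\lVert V\rVert_4^4$) are a reasonable start, and the resulting quantity $\lVert V \rVert_4^4 = \mathbb{E}_x\mathbb{E}^*_{y_1,\ldots,y_8}\prod_i f_0^{\epsilon_i}(x+a_i)\prod_j f_1^{\epsilon'_j}(x+b_j)$ does indeed involve eight $y$-variables. But the final Fourier-analytic step, as described, does not work. After Fourier expansion you are left with a sum over the twelve frequencies (eleven of them free after the $x$-constraint), in which each of the genuine exponential sums $\mathbb{E}^*_{y_\ell}e_p(c_\ell F(y_\ell)+d_\ell G(y_\ell))$ is \emph{weighted by the arbitrary Fourier coefficients} $\widehat{f_0}(n_i)$ and $\widehat{f_1}(m_j)$. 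Since these $\widehat{f}$-weights are not additive characters, there is no orthogonality over frequencies available, and you cannot ``equate the phase coefficients of $F(y_\ell),G(y_\ell)$ to zero'' -- those are conditions on the frequencies, not equations in $y$-space, and eliminating frequencies from linear conditions on frequencies cannot produce the Roth variety equations \eqref{eqs: Roth1}--\eqref{eqs: Roth5}, which live entirely in the $y$-variables. In short, $\lVert V\rVert_4^4$ is a single object in which the $\widehat{f_0}$-sum, the $\widehat{f_1}$-sum, and the $y$-exponential-sum are all entangled; your proposal asserts without argument that they can be bounded separately.

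The paper avoids this by reversing the order: it Fourier-expands \emph{first}, so that $\Lambda$ is expressed in terms of the $\widehat{f}$'s and the kernel $K(n_1,n_2)=\mathbb{E}^*_y e_p(n_1 F(y)+n_2 G(y))$, and then performs the Cauchy--Schwarz steps in the \emph{frequency} variables $n_2$, then $(n_1,m_1)$, then $(n_1,m_1,a)$. The crucial third step of \eqref{PETeq4} cleanly splits the remaining expression into a factor $\sum_{n_1,m_1,a}\lvert\widehat{f_0}(-n_1)\overline{\widehat{f_0}}(-m_1)\overline{\widehat{f_0}}(-n_1+a)\widehat{f_0}(-m_1+a)\rvert^2$ (which is readily $\leq\lVert f_0\rVert_2^4\lVert\widehat{f_0}\rVert_{l^4}^4$) and a pure $K$-correlation $\sum_{n_1,m_1,a,b,b'} K\overline{K}\cdots$ with \emph{no $\widehat{f}$-weights whatsoever}. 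Only then (in \eqref{PETeq6}--\eqref{PETeq7}) can one expand the eight $K$'s in eight $y$-variables and apply orthogonality over the five free frequency variables $(n_1,m_1,a,b,b')$, which produce the five Roth equations as conditions on $(y_1,\ldots,y_8)$. The separation of the $\widehat{f_0}$-factor from the $K$-factor is the nontrivial structural move that is missing in your proposal; note also that your power accounting already diverges from the paper after the second step ($|\Lambda|^4\leq\lVert f_2\rVert_2^4\lVert f_1\rVert_2^2\lVert V\rVert_2^2$ versus the paper's $|\Lambda|^4\leq\lVert f_2\rVert_2^4\lVert f_1\rVert_2^4(\cdots)$), because your $x$-space Cauchy--Schwarz peels off only one copy of $f_1$ whereas the paper's frequency-space Cauchy--Schwarz removes both $\widehat{f_1}$ factors at once.
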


\begin{proof}
    Fourier expand the functions $f_0,f_1,f_2$ to obtain
    \begin{equation}
        \begin{split}
            &\lvert\mathbb{E}^*_{x,y} f_0(x)f_1(x+F(y))f_2(x+G(y))\rvert\\
        &= \Bigl\lvert\sum_{n_0,n_1,n_2} \widehat{f_0}(n_0)\widehat{f_1}(n_1)\widehat{f_2}(n_2)\mathbb{E}_x e_p(n_0 x+n_1 x+n_2 x)\mathbb{E}^*_y e_p(n_1F(y)+n_2G(y))\Bigr\rvert.
        \end{split}
    \end{equation}
    
    For convenience, denote $K(n_1,n_2)=\mathbb{E}^*_y e_p(n_1F(y)+n_2G(y))$. By orthogonality of the characters, we have
    \begin{equation}
        \begin{split}
            &\Bigl\lvert\sum_{n_0,n_1,n_2} \widehat{f_0}(n_0)\widehat{f_1}(n_1)\widehat{f_2}(n_2)\mathbb{E}_x e_p(n_0 x+n_1 x+n_2 x)\mathbb{E}^*_y e_p(n_1F(y)+n_2G(y))\Bigr\rvert\\
        &=\Bigl\lvert\sum_{n_1,n_2} \widehat{f_0}(-n_1-n_2)\widehat{f_1}(n_1)\widehat{f_2}(n_2)K(n_1,n_2)\Bigr\rvert.
        \end{split}
    \end{equation}
    
    Rearrange, apply Cauchy--Schwarz inequality in $n_2$ and apply Parseval's identity to obtain
\begin{equation}
    \begin{split}
        &\Bigl\lvert\sum_{n_1,n_2} \widehat{f_0}(-n_1-n_2)\widehat{f_1}(n_1)\widehat{f_2}(n_2)K(n_1,n_2)\Bigr\rvert\\
        &=\Bigl\lvert\sum_{n_2} \widehat{f_2}(n_2)\sum_{n_1}\widehat{f_0}(-n_1-n_2)\widehat{f_1}(n_1)K(n_1,n_2)\Bigr\rvert\\
        &\leq \lVert f_2 \rVert_2 \Bigl( \sum_{n_2,n_1,m_1} \widehat{f_0}(-n_1-n_2)\overline{\widehat{f_0}}(-m_1-n_2)\widehat{f_1}(n_1)\overline{\widehat{f_1}}(m_1)K(n_1,n_2)\overline{K}(m_1,n_2)\Bigr)^{1/2}.
    \end{split}
\end{equation}

Once again, rearrange, apply Cauchy--Schwarz inequality in $n_1,m_1$ and apply Parseval's identity to obtain
\begin{equation}\label{PETeq1}
    \begin{split}
        &\lVert f_2 \rVert_2 \Bigl( \sum_{n_2,n_1,m_1} \widehat{f_0}(-n_1-n_2)\overline{\widehat{f_0}}(-m_1-n_2)\widehat{f_1}(n_1)\overline{\widehat{f_1}}(m_1)K(n_1,n_2)\overline{K}(m_1,n_2)\Bigr)^{1/2}\\
        &= \lVert f_2 \rVert_2 \Bigl( \sum_{n_1,m_1} \widehat{f_1}(n_1)\overline{\widehat{f_1}}(m_1)\sum_{n_2}\widehat{f_0}(-n_1-n_2)\overline{\widehat{f_0}}(-m_1-n_2)K(n_1,n_2)\overline{K}(m_1,n_2)\Bigr)^{1/2}\\
        &\leq \lVert f_2 \rVert_2 \lVert f_1 \rVert_2\Bigl( \sum_{n_1,m_1,n_2,m_2} \widehat{f_0}(-n_1-n_2)\overline{\widehat{f_0}}(-m_1-n_2)\overline{\widehat{f_0}}(-n_1-m_2)\widehat{f_0}(-m_1-m_2)\\
        &\quad\quad\quad\quad\quad\quad\quad\quad\quad\quad\quad\quad K(n_1,n_2)\overline{K}(m_1,n_2)\overline{K}(n_1,m_2)K(m_1,m_2)\Bigr)^{1/4}.
    \end{split}
\end{equation}

By the change of variables $n_1\leftrightarrow n_1-n_2$, $m_1\leftrightarrow m_1-n_2$, the last line in the expression \eqref{PETeq1} is equal to 
\begin{equation}\label{PETeq2}
    \begin{split}
        &\lVert f_2 \rVert_2 \lVert f_1 \rVert_2\Bigl( \sum_{n_1,m_1,n_2,m_2} \widehat{f_0}(-n_1)\overline{\widehat{f_0}}(-m_1)\overline{\widehat{f_0}}(-n_1+n_2-m_2)\widehat{f_0}(-m_1+n_2-m_2)\\
        &\quad\quad\quad\quad\quad\quad\quad\quad\quad\quad K(n_1-n_2,n_2)\overline{K}(m_1-n_2,n_2)\overline{K}(n_1-n_2,m_2)K(m_1-n_2,m_2)\Bigr)^{1/4}.
    \end{split}
\end{equation}

By the change of variables $a\leftrightarrow n_2-m_2,b\leftrightarrow n_2$, the expression \eqref{PETeq2} is equal to
\begin{equation}\label{PETeq3}
    \begin{split}
        &\lVert f_2 \rVert_2 \lVert f_1 \rVert_2\Bigl( \sum_{n_1,m_1,a,b} \widehat{f_0}(-n_1)\overline{\widehat{f_0}}(-m_1)\overline{\widehat{f_0}}(-n_1+a)\widehat{f_0}(-m_1+a)\\
        &\quad\quad\quad\quad\quad\quad\quad\quad K(n_1-b,b)\overline{K}(m_1-b,b)\overline{K}(n_1-b,b-a)K(m_1-b,b-a)\Bigr)^{1/4}.
    \end{split}
\end{equation}

Rearranging and applying Cauchy--Schwarz inequality in $n_1,m_1,a$, the expression \eqref{PETeq3} is equal to
\begin{equation}\label{PETeq4}
    \begin{split}
        &\lVert f_2 \rVert_2 \lVert f_1 \rVert_2\Bigl( \sum_{n_1,m_1,a} \widehat{f_0}(-n_1)\overline{\widehat{f_0}}(-m_1)\overline{\widehat{f_0}}(-n_1+a)\widehat{f_0}(-m_1+a)\\
        &\quad\quad\quad\quad\quad\quad\quad\quad\sum_b K(n_1-b,b)\overline{K}(m_1-b,b)\overline{K}(n_1-b,b-a)K(m_1-b,b-a)\Bigr)^{1/4}\\
        &\leq \lVert f_2 \rVert_2 \lVert f_1 \rVert_2 \Bigl( \sum_{n_1,m_1,a} \lvert \widehat{f_0}(-n_1)\overline{\widehat{f_0}}(-m_1)\overline{\widehat{f_0}}(-n_1+a)\widehat{f_0}(-m_1+a) \rvert^2 \Bigr)^{1/8}\\
        &\quad \Bigl( \sum_{n_1,m_1,a,b,b'} K(n_1-b,b)\overline{K}(m_1-b,b)\overline{K}(n_1-b,b-a)K(m_1-b,b-a)\\
        &\quad\quad\quad\quad\quad\quad \overline{K}(n_1-b',b')K(m_1-b',b')K(n_1-b',b'-a)\overline{K}(m_1-b',b'-a)\Bigr)^{1/8}.
    \end{split}
\end{equation}

We estimate the two sums above separately. First, note that
\begin{equation}\label{PETeq5}
\begin{split}
    &\sum_{n_1,m_1,a} \lvert \widehat{f_0}(-n_1)\overline{\widehat{f_0}}(-m_1)\overline{\widehat{f_0}}(-n_1+a)\widehat{f_0}(-m_1+a) \rvert^2\\
    &=\sum_{n_1,m_1} \lvert \widehat{f_0}(-n_1)\widehat{f_0}(-m_1) \rvert^2 \sum_a \lvert \widehat{f_0}(-n_1+a)\widehat{f_0}(-m_1+a) \rvert^2\\
    &\leq \sum_{n_1,m_1} \lvert \widehat{f_0}(-n_1)\widehat{f_0}(-m_1) \rvert^2 \lVert \widehat{f_0} \rVert_{l^4}^4\\
    &=\lVert f_0 \rVert_{2}^4\lVert \widehat{f_0} \rVert_{l^4}^4
\end{split}
\end{equation}
where the second to last line follows from the Cauchy--Schwarz inequality. Next, note that
\begin{equation}\label{PETeq6}
    \begin{split}
        &\sum_{n_1,m_1,a,b,b'} K(n_1-b,b)\overline{K}(m_1-b,b)\overline{K}(n_1-b,b-a)K(m_1-b,b-a)\\
        &\quad\quad\quad\quad\quad\quad \overline{K}(n_1-b',b')K(m_1-b',b')K(n_1-b',b'-a)\overline{K}(m_1-b',b'-a)\\
        &=\frac{1}{p^8}\sum_{\substack{n_1,m_1,a\\b,b'}}\quad\sideset{}{^*}\sum_{\substack{y_1,y_2,y_3,y_4\\ y_5,y_6,y_7,y_8}}\\
        &\quad\quad\quad e_p((n_1-b)F(y_1)+bG(y_1))\;e_p(-(m_1-b)F(y_2)-bG(y_2))\\
        &\quad\quad\quad e_p(-(n_1-b)F(y_3)-(b-a)G(y_3))\;e_p((m_1-b)F(y_4)+(b-a)G(y_4))\\
        &\quad\quad\quad e_p(-(n_1-b')F(y_5)-b'G(y_5))\;e_p((m_1-b')F(y_6)+b'G(y_6))\\
        &\quad\quad\quad e_p((n_1-b')F(y_7)+(b'-a)G(y_7))\;e_p(-(m_1-b')F(y_8)-(b'-a)G(y_8)).
    \end{split}
\end{equation}

This is equal to
\begin{equation}\label{PETeq7}
    \begin{split}
        &\frac{1}{p^8}\quad \sideset{}{^*}\sum_{\substack{y_1,y_2,y_3,y_4\\ y_5,y_6,y_7,y_8}}\quad\sum_{\substack{n_1,m_1,a\\b,b'}}\\
        &\quad\quad\quad e_p(b(-F(y_1)+G(y_1)+F(y_2)-G(y_2)+F(y_3)-G(y_3)-F(y_4)+G(y_4)))\\
        &\quad\quad\quad e_p(b'(F(y_5)-G(y_5)-F(y_6)+G(y_6)-F(y_7)+G(y_7)+F(y_8)-G(y_8)))\\
        &\quad\quad\quad e_p(n_1(F(y_1)-F(y_3)-F(y_5)+F(y_7)))\\
        &\quad\quad\quad e_p(m_1(-F(y_2)+F(y_4)+F(y_6)-F(y_8)))\\
        &\quad\quad\quad e_p(a(G(y_3)-G(y_4)-G(y_7)+G(y_8)))\\
        &=\frac{1}{p^8}\quad \sum_{(y_1,y_2,y_3,y_4,y_5,y_6,y_7,y_8)\in Y(\mathbb{F}_p)} p^5\\
        &=\frac{1}{p^3}|Y(\mathbb{F}_p)|
    \end{split}
\end{equation}
where the second to last equality follows from the orthogonality of the characters. Combining inequalities \eqref{PETeq4},\eqref{PETeq5} and \eqref{PETeq7} completes the proof.
\end{proof}

\begin{rem}
    In the proof of Theorem \ref{PET}, if we were to only apply Cauchy--Schwarz inequality two times and stop at \eqref{PETeq3}, then we could formulate an alternative approach to Theroem \ref{main}, conditionally on the estimates of multidimensional exponential sums on varieties with rational function phases, which is highly nontrivial and out of reach of the current arithmetic geometry methods.
\end{rem}

\section{Dimension Estimates}\label{sec: dim}

In Section \ref{sec: PET}, we establish a Gowers norm control, conditionally on estimating the size of the $\mathbb{F}_p$-points of the Roth variety. We will prove the required estimates in Theorem \ref{dim}, completing the algebraic geometry PET induction. The proof of Theorem \ref{dim} is quite long and technical, hence we would like to briefly describe the strategy first.

We would decompose the Roth variety $Y$ into five pieces $Y=Y_{\mathrm{gen}}\cup Y_{\mathrm{low}}\cup Z_{\mathrm{good}}\cup Z_{\mathrm{bad}}\cup Z_{\mathrm{low}}$ and estimate via different techniques. The variety $Y_{\mathrm{gen}}$ is a generic part of $Y$, which can be handled by complex analytic techniques. The variety $Z_{\mathrm{good}}$ is a generic component of a special subvariety of $Y$, which can be handled by complex analytic techniques too. On the other hand, the variety $Z_{\mathrm{bad}}$ is a special subvariety with extra constraints and special structures, hence we have extra inputs for mod $p$ point counting. Finally, the varieties $Y_{\mathrm{low}}$ and $Z_{\mathrm{low}}$ are of lower dimension, therefore we can control them more easily.

\begin{thm}\label{dim}
    Let $F(t),G(t)\in \mathbb{Q}(t)$ be rational functions over $\mathbb{Q}$ such that $F(t),G(t)$ and the constant function $1$ are linearly independent over $\mathbb{Q}$, and $Y(\mathbb{F}_p)$ be the $\mathbb{F}_p$-points of the Roth variety  associated to $F(t),G(t)$. Then, we have the following point counting estimates:
    \[|Y(\mathbb{F}_p)| \ll p^3.\]    
\end{thm}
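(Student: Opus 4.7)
The bound $|Y(\mathbb{F}_p)| \ll p^3$ matches the naive expected dimension: $Y$ is cut out by $5$ equations in $8$ variables, so one expects $\dim_{\mathbb{C}} Y_{\mathbb{C}} = 3$, whence the point count follows from a Lang--Weil or direct Schwartz--Zippel argument (with implicit constants absorbed into the $F,G$-dependence). Following the sketch in the excerpt, I would decompose $Y$ into a generic piece and a controlled union of special subvarieties, and show each has complex dimension at most $3$.

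For the generic piece $Y_{\mathrm{gen}}$, I would write down the $5 \times 8$ Jacobian $J$ of the defining expressions. Each row is supported on exactly four columns, with nonzero entries of the form $\pm F'(y_i)$, $\pm G'(y_i)$, or $\pm(F'(y_i) - G'(y_i))$; in particular the columns indexed by $y_1, y_2, y_5, y_6$ each hit only two rows. A column-by-column elimination, using that the $\mathbb{Q}$-linear independence of $F, G, 1$ forces the $\mathbb{Q}$-linear independence of $F', G'$ (so $aF' + bG' \equiv 0 \Rightarrow a = b = 0$, and in particular $F' \not\equiv G'$), shows that the five rows of $J$ are linearly independent over $\mathbb{C}(y_1, \ldots, y_8)$. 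Consequently the open locus $Y_{\mathrm{gen}} \subset Y$ on which $J$ has rank $5$ is smooth of dimension exactly $3$, contributing $|Y_{\mathrm{gen}}(\mathbb{F}_p)| \ll p^3$.

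Any remaining component of $Y$ must lie in the rank-dropping locus, which by inspection of the $5 \times 5$ minors of $J$ is a finite union of explicit special subvarieties defined by coincidences $y_i = y_j$ and by vanishing of $F'$, $G'$, or $F' - G'$ at individual $y_i$'s. On each such subvariety of ambient dimension $d$ one restricts the five equations and checks that the residual system still cuts down to dimension $\leq 3$; the low-dimensional strata (contributing $Y_{\mathrm{low}}$ and $Z_{\mathrm{low}}$) are immediate, and on generic components of the higher-dimensional diagonals a second transversality argument using the $\mathbb{Q}$-independence of $F, G, 1$ handles the $Z_{\mathrm{good}}$ piece. The main obstacle will be the stratum $Z_{\mathrm{bad}}$: on some of the large diagonals several of the five equations become simultaneously tautological, and the surviving equations become algebraically dependent in a way not detected by generic Jacobian analysis. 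On those strata one cannot close a purely complex-analytic argument and must count $\mathbb{F}_p$-points directly, exploiting the residual cube-type structure of the restricted equations, most likely by reducing to Weil's bound on an auxiliary algebraic curve extracted from the surviving relations. Pinning down exactly which special loci sit in $Z_{\mathrm{bad}}$ and setting up this curve-level count is where I expect the bulk of the technical work to lie.
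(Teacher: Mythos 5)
Your high-level strategy (generic piece via Jacobian rank, then carve out and control the rank-drop locus) matches the paper, and your $Y_{\mathrm{gen}}$ treatment is essentially the paper's. But your description of the rank-drop locus is wrong, and this is where the proposal comes apart. The paper takes the $5\times 5$ minor consisting of the first five columns; it factors as $\bigl(G'(y_5)-F'(y_5)\bigr)\bigl[G'(y_1)F'(y_2)F'(y_3)G'(y_4)-F'(y_1)G'(y_2)G'(y_3)F'(y_4)\bigr]$. The second factor is a genuinely coupled relation among $y_1,y_2,y_3,y_4$; it is \emph{not} a finite union of coincidences $y_i=y_j$ together with vanishings of $F'$, $G'$, or $F'-G'$ at single coordinates. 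So the claim that inspection of the minors reduces the special locus to diagonal strata and simple factor vanishings fails at the first step, and the rest of your stratification scheme, which is built on that expectation, does not describe the actual special locus.

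The other missing ingredient is how $Z_{\mathrm{bad}}$ is actually handled, and it is not via Weil's bound on an auxiliary curve. The paper iterates the Jacobian device: on the hypersurface $\widetilde{D}(y_1,\dots,y_4)=0$ it forms a secondary $2\times 4$ Jacobian (from \eqref{eqs: Roth1} together with $\widetilde{D}=0$), extracts a $2\times 2$ minor $E(y_1,y_4)$, and normalizes it to $\widetilde{E}(y_1,y_4)=\Phi(y_1)-\Phi(y_4)$ where $\Phi(t)=\bigl(F'(t)-G'(t)\bigr)\tfrac{G'(t)}{F'(t)}\big/\bigl(\tfrac{G'(t)}{F'(t)}\bigr)'$. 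The crux of the proof is showing $\Phi$ is nonconstant, done via an integrating-factor ODE argument: if $\Phi\equiv c$, then $\exp\bigl(-(F-G)/c\bigr)\cdot G'/F'$ has zero derivative, which forces $G'/F'\equiv 0$ or $F-G$ constant, contradicting the linear independence of $F,G,1$. Once $\Phi$ is nonconstant, $\widetilde{E}=0$ determines $y_1$ up to $O(1)$ from $y_4$, and then $Z_{\mathrm{bad}}$ (as well as $Y_{\mathrm{low}}$, $Z_{\mathrm{low}}$) is bounded by elementary one-variable-at-a-time elimination using the defining equations; no Lang--Weil or Weil-for-curves is invoked for these strata. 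Without identifying the separated form $\Phi(y_1)-\Phi(y_4)$ and proving nonconstancy of $\Phi$, your plan has no route to close the count on the bad stratum, and the proposed reduction to an auxiliary curve plus Weil's bound does not correspond to anything realizable here.
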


\begin{proof}
    We would decompose $Y(\mathbb{F}_p)$ into various parts and estimate via different techniques accordingly. First, note that the first five columns of the Jacobian matrix of the defining equations \eqref{eqs: Roth1}, \eqref{eqs: Roth2}, \eqref{eqs: Roth3}, \eqref{eqs: Roth4}, \eqref{eqs: Roth5} of the Roth variety are
    \begin{equation*}
        \begin{bmatrix}
        F'(y_1)-G'(y_1) & -F'(y_2)+G'(y_2) & -F'(y_3)+G'(y_3) & F'(y_4)-G'(y_4) & 0 \\
        0 & 0 & 0 & 0 & F'(y_5)-G'(y_5)\\
        F'(y_1) & 0 & -F'(y_3) & 0 & -F'(y_5)\\
        0 & F'(y_2) & 0 & -F'(y_4) & 0\\
        0 & 0 & G'(y_3) & -G'(y_4) & 0        
    \end{bmatrix}.
    \end{equation*}
    
    Let $D(y_1,y_2,y_3,y_4,y_5)$ be the determinant of the matrix, that is,
    \begin{equation*}
        \bigl(G'(y_5)-F'(y_5)\bigr)\begin{vmatrix}
            F'(y_1)-G'(y_1) & -F'(y_2)+G'(y_2) & -F'(y_3)+G'(y_3) & F'(y_4)-G'(y_4)\\
            F'(y_1) & 0 & -F'(y_3) & 0\\
            0 & F'(y_2) & 0 & -F'(y_4)\\
            0 & 0 & G'(y_3) & -G'(y_4)
        \end{vmatrix}
    \end{equation*}
    which is equal to
    \begin{equation*}
        \bigl(G'(y_5)-F'(y_5)\bigr)\begin{vmatrix}
            -G'(y_1) & G'(y_2) & 0 & 0\\
            F'(y_1) & 0 & -F'(y_3) & 0\\
            0 & F'(y_2) & 0 & -F'(y_4)\\
            0 & 0 & G'(y_3) & -G'(y_4)
        \end{vmatrix}.
    \end{equation*}
    
    Hence, we have 
    \begin{equation}
        D(y_1,y_2,y_3,y_4,y_5)=(G'(y_5)-F'(y_5))\Bigl[ G'(y_1)F'(y_2)F'(y_3)G'(y_4)-F'(y_1)G'(y_2)G'(y_3)F'(y_4)\Bigr].
    \end{equation}
    
    Define
    \begin{equation}
        Y_{\mathrm{gen}}=Y\cap \{(y_1,y_2,y_3,y_4,y_5,y_6,y_7,y_8)\in \mathbb{A}^8:D(y_1,y_2,y_3,y_4,y_5)\neq 0\}
    \end{equation}
    and
    \begin{equation}
        Y_{\mathrm{sp}}=Y\cap \{(y_1,y_2,y_3,y_4,y_5,y_6,y_7,y_8)\in \mathbb{A}^8:D(y_1,y_2,y_3,y_4,y_5)=0\}.
    \end{equation}
    
    Note that we have $Y(\mathbb{F}_p)=Y_{\mathrm{gen}}(\mathbb{F}_p)\cup Y_{\mathrm{sp}}(\mathbb{F}_p)$. Informally, the variety $Y_{\mathrm{gen}}$ is a generic part of $Y$ which lies in the smooth component of $Y$, and $Y_{\mathrm{sp}}$ is a special subvariety of $Y$ cut out by $D(y_1,y_2,y_3,y_4,y_5)$.\\
    
    The associated complex analytic space of $Y_{\mathrm{gen}}(\mathbb{C})$ is a $3$-dimensional complex manifold since the Jacobian matrices have full rank and we can apply the implicit function theorem. By Noether normalization, there exists a finite surjective map $Y_{\mathrm{gen}}(\mathbb{C})\longrightarrow \mathbb{A}^d(\mathbb{C})$ between varieties, where $d$ is the dimension of $Y_{\mathrm{gen}}(\mathbb{C})$ as a variety and $\mathbb{A}^d$ is the $d$-dimensional affine space. Regard the map $Y_{\mathrm{gen}}(\mathbb{C})\longrightarrow \mathbb{A}^d(\mathbb{C})$ as a finite holomorphic map between complex manifolds. By the first theorem in \cite[Chapter 5, Section 4.1]{MR0755331}, we deduce that the dimension of $Y_{\mathrm{gen}}(\mathbb{C})$ as a variety is $3$. (We compare the dimension of a variety over $\mathbb{C}$ and the dimension of its associated complex analytic space via \textit{ad hoc} argument here, for general dimension theory of complex analytic spaces, see \cite[Appendix B]{MR0463157} and \cite[Chapter 5, Section 1]{MR0755331}.)
    
    Consider an affine $\mathbb{Z}$-model of $Y_{\mathrm{gen}}(\mathbb{C})$ by clearing out the denominators in the defining equations of the Roth variety and adding one auxiliary variable $z$ and one equation to require that the denominators are nonzero. To be more specific, write $F(t)=a(t)/b(t)$ and $G(t)=c(t)/d(t)$ with $a(t),b(t),c(t),d(t)\in \mathbb{Z}[t]$, let $I\subset \mathbb{Z}[y_1,y_2,y_3,y_4,y_5,y_6,y_7,y_8,z]$ be the ideal generated by the following polynomials:
    \begin{gather}
        \Bigl(\prod_{i=1}^4 b(y_i)d(y_i)\Bigr)\Bigl(F(y_1)-G(y_1)-F(y_2)+G(y_2)-F(y_3)+G(y_3)+F(y_4)-G(y_4)\Bigr)\\
        \Bigl(\prod_{i=5}^8 b(y_i)d(y_i)\Bigr)\Bigl(F(y_5)-G(y_5)-F(y_6)+G(y_6)-F(y_7)+G(y_7)+F(y_8)-G(y_8)\Bigr)\\
        \Bigl(b(y_1)b(y_3)b(y_5)b(y_7)\Bigr)\Bigl(F(y_1)-F(y_3)-F(y_5)+F(y_7)\Bigr)\\
        \Bigl(b(y_2)b(y_4)b(y_6)b(y_8)\Bigr)\Bigl(F(y_2)-F(y_4)-F(y_6)+F(y_8)\Bigr)\\
        \Bigl(d(y_3)d(y_4)d(y_7)d(y_8)\Bigr)\Bigl(G(y_3)-G(y_4)-G(y_7)+G(y_8)\Bigr)\\
        z\Bigl(\prod_{i=1}^8 b(y_i)d(y_i)\Bigr)\biggl[\Bigl(\prod_{i=1}^5 b(y_i)d(y_i)\Bigr)^2 D(y_1,y_2,y_3,y_4,y_5)\biggr]-1.
    \end{gather}
    
    Define affine $\mathbb{Z}$-model $Y_{\mathrm{gen}}=\mathrm{Spec}\; \mathbb{Z}[y_1,y_2,y_3,y_4,y_5,y_6,y_7,y_8,z]/I$. Note that the $\mathbb{C}$-points of $Y_{\mathrm{gen}}\times_\mathbb{Z} \mathbb{C}$ is isomorphic to $Y_{\mathrm{gen}}(\mathbb{C})$, hence we have $\dim_{\mathrm{Sch}}Y_{\mathrm{gen}}\times_\mathbb{Z} \mathbb{C}=\dim_{\mathrm{Var}}Y_{\mathrm{gen}}(\mathbb{C})=3$. By \cite[Chapter 2, Exercise 3.20.(f)]{MR0463157}, we have the invariance of dimension under base change, i.e., $\dim_{\mathrm{Sch}}Y_{\mathrm{gen}}\times_\mathbb{Z} \mathbb{Q}=\dim_{\mathrm{Sch}}Y_{\mathrm{gen}}\times_\mathbb{Z} \mathbb{C}=3$. (Note that the integral assumption in \cite[Chapter 2, Exercise 3.20.(f)]{MR0463157} can be removed if one does not require the scheme after base change is equidimensional.)
    
    Note that the structure map $Y_{\mathrm{gen}}=\mathrm{Spec}\; \mathbb{Z}[y_1,y_2,y_3,y_4,y_5,y_6,y_7,y_8,z]/I \longrightarrow \mathrm{Spec}\;\mathbb{Z}$ is of finite type and the base $\mathrm{Spec}\;\mathbb{Z}$ is irreducible, by \cite[\href{https://stacks.math.columbia.edu/tag/05F6}{Section 05F6, Lemma 37.30.1}]{stacks-project}, we deduce that the dimension of the generic fiber is the same as mod $p$ fibers for all but finitely many $p$, that is, $\dim_{\mathrm{Sch}}Y_{\mathrm{gen}}\times_\mathbb{Z} \mathbb{Q}=\dim_{\mathrm{Sch}}Y_{\mathrm{gen}}\times_\mathbb{Z} \mathbb{F}_p=3$ for all but finitely many $p$. Once again, by \cite[Chapter 2, Exercise 3.20.(f)]{MR0463157}, we have $\dim_{\mathrm{Sch}}Y_{\mathrm{gen}}\times_\mathbb{Z} \mathbb{F}_p=\dim_{\mathrm{Sch}}Y_{\mathrm{gen}}\times_\mathbb{Z} \overline{\mathbb{F}}_p$. Hence, we have $\dim_{\mathrm{Var}}Y(\overline{\mathbb{F}}_p)=3$ for all but finitely many $p$. Now, the Lang-Weil bound \cite{MR0065218} (also see \cite[Lemma 1]{Tao})  implies that $Y_{\mathrm{gen}}(\mathbb{F}_p)\ll p^3$.\\

    Next, we would like to further decompose $Y_{\mathrm{sp}}(\mathbb{F}_p)$, let $\widetilde{D}(y_1,y_2,y_3,y_4)$ denote the function
    \begin{equation}
    \frac{D(y_1,y_2,y_3,y_4,y_5)}{F'(y_1)F'(y_2)F'(y_3)F'(y_4)\bigl(G'(y_5)-F'(y_5)\bigr)}
        =\frac{G'(y_1)G'(y_4)}{F'(y_1)F'(y_4)}-\frac{G'(y_2)G'(y_3)}{F'(y_2)F'(y_3)}.
    \end{equation}
    
    Define 
    \begin{equation}
        Y_{\mathrm{low}}=Y_{\mathrm{sp}}\cap \{(y_1,y_2,y_3,y_4,y_5,y_6,y_7,y_8)\in \mathbb{A}^8:F'(y_1)F'(y_2)F'(y_3)F'(y_4)\bigl(G'(y_5)-F'(y_5)\bigr)=0\}
    \end{equation}
    and
    \begin{equation}
        Z=Y_{\mathrm{sp}}\cap \{(y_1,y_2,y_3,y_4,y_5,y_6,y_7,y_8)\in \mathbb{A}^8:\widetilde{D}(y_1,y_2,y_3,y_4)=0\}.
    \end{equation}
    
    Note that we have $Y_{\mathrm{sp}}(\mathbb{F}_p)=Y_{\mathrm{low}}(\mathbb{F}_p)\cup Z(\mathbb{F}_p)$. Informally, the variety $Y_{\mathrm{low}}(\mathbb{F}_p)$ has strong constraint $F'(y_1)F'(y_2)F'(y_3)F'(y_4)\bigl(G'(y_5)-F'(y_5)\bigr)=0$, hence this lower dimensional contribution can be bounded more easily later. We would like to further decompose $Z(\mathbb{F}_p)$. To do so, consider the Jacobian matrix of the following equations:
    \begin{gather}
        F(y_1)-G(y_1)-F(y_2)+G(y_2)-F(y_3)+G(y_3)+F(y_4)-G(y_4)=0\\
        \widetilde{D}(y_1,y_2,y_3,y_4)=0,
    \end{gather}
    which is 
    \begin{equation*}
        \begin{bmatrix}
        F'(y_1)-G'(y_1) & -F'(y_2)+G'(y_2) & -F'(y_3)+G'(y_3) & F'(y_4)-G'(y_4) \\
        \frac{G'(y_4)}{F'(y_4)}\Bigl(\frac{G'(y_1)}{F'(y_1)}\Bigr)' & -\frac{G'(y_3)}{F'(y_3)}\Bigl(\frac{G'(y_2)}{F'(y_2)}\Bigr)' & -\frac{G'(y_2)}{F'(y_2)}\Bigl(\frac{G'(y_3)}{F'(y_3)}\Bigr)' & \frac{G'(y_1)}{F'(y_1)}\Bigl(\frac{G'(y_4)}{F'(y_4)}\Bigr)' 
    \end{bmatrix}.
    \end{equation*}
    
    Let $E(y_1,y_4)$ be the determinant of the minor matrix of the Jacobian matrix consisting of the first column and fourth column, i.e.
    \begin{equation*}
        E(y_1,y_4)=\begin{vmatrix}
            F'(y_1)-G'(y_1) & F'(y_4)-G'(y_4) \\
        \frac{G'(y_4)}{F'(y_4)}\Bigl(\frac{G'(y_1)}{F'(y_1)}\Bigr)' & \frac{G'(y_1)}{F'(y_1)}\Bigl(\frac{G'(y_4)}{F'(y_4)}\Bigr)'
        \end{vmatrix}
    \end{equation*}
    which is 
    \begin{equation}
        \bigl(F'(y_1)-G'(y_1)\bigr)\frac{G'(y_1)}{F'(y_1)}\Bigl(\frac{G'(y_4)}{F'(y_4)}\Bigr)'-\bigl(F'(y_4)-G'(y_4)\bigr)\frac{G'(y_4)}{F'(y_4)}\Bigl(\frac{G'(y_1)}{F'(y_1)}\Bigr)'.
    \end{equation}
    
    Also, let $\widetilde{E}(y_1,y_4)$ denote the function
    \begin{equation}
            \frac{E(y_1,y_4)}{\Bigl(\frac{G'(y_1)}{F'(y_1)}\Bigr)'\Bigl(\frac{G'(y_4)}{F'(y_4)}\Bigr)'}
            =\frac{\bigl(F'(y_1)-G'(y_1)\bigr)\frac{G'(y_1)}{F'(y_1)}}{\Bigl(\frac{G'(y_1)}{F'(y_1)}\Bigr)'}-\frac{\bigl(F'(y_4)-G'(y_4)\bigr)\frac{G'(y_4)}{F'(y_4)}}{\Bigl(\frac{G'(y_4)}{F'(y_4)}\Bigr)'}.
    \end{equation}
    
    Define
    \begin{equation}
        Z_{good}=Z\cap \{(y_1,y_2,y_3,y_4,y_5,y_6,y_7,y_8)\in \mathbb{A}^8:E(y_1,y_4)\neq 0\},
    \end{equation}
    \begin{equation}
        Z_{bad}=Z\cap \{(y_1,y_2,y_3,y_4,y_5,y_6,y_7,y_8)\in \mathbb{A}^8:\widetilde{E}(y_1,y_4)=0\}
    \end{equation}
    and
    \begin{equation}
        Z_{low}=Z\cap \Bigl\{(y_1,y_2,y_3,y_4,y_5,y_6,y_7,y_8)\in \mathbb{A}^8:\Bigl(\frac{G'(y_1)}{F'(y_1)}\Bigr)'\Bigl(\frac{G'(y_4)}{F'(y_4)}\Bigr)'=0\Bigr\}.
    \end{equation}
    
    Note that we have $Z(\mathbb{F}_p)=Z_{\mathrm{good}}(\mathbb{F}_p)\cup Z_{\mathrm{bad}}(\mathbb{F}_p)\cup Z_{\mathrm{low}}(\mathbb{F}_p)$. Informally, the variety $Z_{\mathrm{good}}(\mathbb{F}_p)$ is a generic part of $Z(\mathbb{F}_p)$ which satisfies good property, the variety $Z_{\mathrm{bad}}(\mathbb{F}_p)$ is a special subvariety cut out by $\widetilde{E}(y_1,y_4)=0$, and $Z_{\mathrm{low}}(\mathbb{F}_p)$ is a lower dimensional contribution. It remains to bound $|Z_{\mathrm{good}}(\mathbb{F}_p)|,|Z_{\mathrm{bad}}(\mathbb{F}_p)|,|Z_{\mathrm{low}}(\mathbb{F}_p)|$ and $|Y_{\mathrm{low}}(\mathbb{F}_p)|$.\\
    
    We start from bounding $|Z_{\mathrm{good}}(\mathbb{F}_p)|$. Consider the auxiliary variety $X\subset \mathbb{A}^4$ defined by the following equations
    \begin{gather}
        F(y_1)-G(y_1)-F(y_2)+G(y_2)-F(y_3)+G(y_3)+F(y_4)-G(y_4)=0\label{dimeq1}\\
        \widetilde{D}(y_1,y_2,y_3,y_4)=0\label{dimeq2}\\
        E(y_1,y_4)\neq 0.
    \end{gather}
    
    The associated complex analytic space of $X(\mathbb{C})$ is a $2$-dimensional complex manifold since the condition $E(y_1,y_4)\neq 0$ guarantees that the Jacobian matrices of the equations \eqref{dimeq1}, \eqref{dimeq2} have full rank and we can apply the implicit function theorem. Similar to the dimension comparison between $Y_{\mathrm{gen}}(\mathbb{C})$ and its associated complex analytic space, this implies that the dimension of $X(\mathbb{C})$ as a variety is $2$. Moreover, similar to the previous arguments regarding $Y_{\mathrm{gen}}$, we conclude that $X(\mathbb{F}_p)\ll p^2$.
    
    For all points $(y_1,y_2,y_3,y_4,y_5,y_6,y_7,y_8)\in Z_{\mathrm{good}}(\mathbb{F}_p)$, the coordinates $(y_1,y_2,y_3,y_4)$ lie on $X(\mathbb{F}_p)$, hence there are at most $O(p^2)$ choices of $y_1,y_2,y_3,y_4$. Besides, there are at most $p$ choices of $y_5$, hence there are at most $O(p^3)$ choices of $y_1,y_2,y_3,y_4,y_5$. For any given $y_1,y_2,y_3,y_4,y_5$, there are only $O(1)$ choices of $y_7$ by equation \eqref{eqs: Roth3}. For any given $y_1,y_2,y_3,y_4,y_5,y_7$, there are only $O(1)$ choices of $y_8$ by equation \eqref{eqs: Roth5}. For any given $y_1,y_2,y_3,y_4,y_5,y_7,y_8$, there are only $O(1)$ choices of $y_6$ by equation \eqref{eqs: Roth4}. Therefore, we have $Z_{\mathrm{good}}(\mathbb{F}_p)\ll p^3$.\\
    
    Next, we turn our attention to bounding $|Z_{\mathrm{bad}}(\mathbb{F}_p)|$. We would first show that the equation $\widetilde{E}(y_1,y_4)=0$ is a nontrivial constraint by showing 
    \begin{equation}\label{dimeq6}
        \frac{\bigl(F'(y_1)-G'(y_1)\bigr)\frac{G'(y_1)}{F'(y_1)}}{\Bigl(\frac{G'(y_1)}{F'(y_1)}\Bigr)'}
    \end{equation}
    is nonconstant. Suppose not, we have
    \begin{equation}
        c \Bigl(\frac{G'(y_1)}{F'(y_1)}\Bigr)'=\bigl(F'(y_1)-G'(y_1)\bigr)\frac{G'(y_1)}{F'(y_1)}
    \end{equation}
    for some constant $c$. If $c=0$, then we have $F'(y_1)-G'(y_1)=0$ or $G'(y_1)/F'(y_1)=0$, which both contradict the linear independence of $F,G$ and the constant function $1$. If $c\neq 0$, then we have
    \begin{equation}\label{dimeq3}
        \Bigl(\frac{G'(y_1)}{F'(y_1)}\Bigr)'-\frac{F'(y_1)-G'(y_1)}{c}\frac{G'(y_1)}{F'(y_1)}=0.
    \end{equation}
    
    Multiply integrating factor to equation \eqref{dimeq3}, we have
    \begin{equation}\label{dimeq4}
        \biggl[ \exp\Bigl(-\frac{F(y_1)-G(y_1)}{c}\Bigr) \frac{G'(y_1)}{F'(y_1)} \biggr]'=0,
    \end{equation}
    integrate both side of \eqref{dimeq4}, we deduce
    \begin{equation}\label{dimeq5}
        \exp\Bigl(-\frac{F(y_1)-G(y_1)}{c}\Bigr) \frac{G'(y_1)}{F'(y_1)}=\widetilde{c}
    \end{equation}
    for some constant $\widetilde{c}$. Since $G'(y_1)/F'(y_1)$ and $\bigl(F(y_1)-G(y_1)\bigr)/c$ are rational functions, the equality \eqref{dimeq5} can only happen when $G'(y_1)/F'(y_1)=0$ or $\bigl(F(y_1)-G(y_1)\bigr)/c$ is a constant, which both contradict the linear independence of $F,G$ and the constant function $1$. Hence, the rational function \eqref{dimeq6} is not a constant.
    
    Now, for all points $(y_1,y_2,y_3,y_4,y_5,y_6,y_7,y_8)\in Z_{\mathrm{bad}}(\mathbb{F}_p)$, the coordinates $y_2,y_4,y_5$ have at most $p^3$ choices. For any given $y_2,y_4,y_5$, there are at most $O(1)$ choices of $y_1$ by equation $\widetilde{E}(y_1,y_4)=0$. For any given $y_1,y_2,y_4,y_5$, there are at most $O(1)$ choices of $y_3$ by equation \eqref{eqs: Roth1}. For any given $y_1,y_2,y_3,y_4,y_5$, there are at most $O(1)$ choices of $y_7$ by equation \eqref{eqs: Roth3}. For any given $y_1,y_2,y_3,y_4,y_5,y_7$, there are at most $O(1)$ choices of $y_8$ by equation \eqref{eqs: Roth5}. For any given $y_1,y_2,y_3,y_4,y_5,y_7,y_8$, there are at most $O(1)$ choices of $y_6$ by equation \eqref{eqs: Roth4}. Therefore, we have $|Z_{\mathrm{bad}}(\mathbb{F}_p)|\ll p^3$.\\
    
Finally, we estimate $|Y_{\mathrm{low}}(\mathbb{F}_p)|$ and $|Z_{\mathrm{low}}(\mathbb{F}_p)|$. These varieties have strong constraints which allow us to bound their cardinality more easily. We first decompose $Y_{\mathrm{low}}(\mathbb{F}_p)$. Define
\begin{equation}
    Y_{\mathrm{low}}^{(1)}=Y_{\mathrm{low}}\cap \{(y_1,y_2,y_3,y_4,y_5,y_6,y_7,y_8)\in \mathbb{A}^8:F'(y_1)=0\},
\end{equation}
\begin{equation}
    Y_{\mathrm{low}}^{(2)}=Y_{\mathrm{low}}\cap \{(y_1,y_2,y_3,y_4,y_5,y_6,y_7,y_8)\in \mathbb{A}^8:F'(y_2)=0\},
\end{equation}
\begin{equation}
    Y_{\mathrm{low}}^{(3)}=Y_{\mathrm{low}}\cap \{(y_1,y_2,y_3,y_4,y_5,y_6,y_7,y_8)\in \mathbb{A}^8:F'(y_3)=0\},
\end{equation}
\begin{equation}
    Y_{\mathrm{low}}^{(4)}=Y_{\mathrm{low}}\cap \{(y_1,y_2,y_3,y_4,y_5,y_6,y_7,y_8)\in \mathbb{A}^8:F'(y_4)=0\}
\end{equation}
and
\begin{equation}
    Y_{\mathrm{low}}^{(5)}=Y_{\mathrm{low}}\cap \{(y_1,y_2,y_3,y_4,y_5,y_6,y_7,y_8)\in \mathbb{A}^8:\bigl(G'(y_5)-F'(y_5)\bigr)=0\}.
\end{equation}

    Note that we have $Y_{\mathrm{low}}(\mathbb{F}_p)=Y_{\mathrm{low}}^{(1)}(\mathbb{F}_p)\cup Y_{\mathrm{low}}^{(2)}(\mathbb{F}_p)\cup Y_{\mathrm{low}}^{(3)}(\mathbb{F}_p)\cup Y_{\mathrm{low}}^{(4)}(\mathbb{F}_p)\cup Y_{\mathrm{low}}^{(5)}(\mathbb{F}_p)$. For all $(y_1,y_2,y_3,y_4,y_5,y_6,y_7,y_8)\in Y_{\mathrm{low}}^{(1)}(\mathbb{F}_p)$, the coordinates $y_2,y_4,y_5$ have at most $p^3$ choices. For any given $y_2,y_4,y_5$, there are at most $O(1)$ choices of $y_1$ by equation $F'(y_1)=0$. For any given $y_1,y_2,y_4,y_5$, there are at most $O(1)$ choices of $y_3$ by equation \eqref{eqs: Roth1}. For any given $y_1,y_2,y_3,y_4,y_5$, there are at most $O(1)$ choices of $y_7$ by equation \eqref{eqs: Roth3}. For any given $y_1,y_2,y_3,y_4,y_5,y_7$, there are at most $O(1)$ choices of $y_8$ by equation \eqref{eqs: Roth5}. For any given $y_1,y_2,y_3,y_4,y_5,y_7,y_8$, there are at most $O(1)$ choices of $y_6$ by equation \eqref{eqs: Roth4}. Therefore, we have $|Y_{\mathrm{low}}^{(1)}(\mathbb{F}_p)|\ll p^3$. Similarly, we have $|Y_{\mathrm{low}}^{(2)}(\mathbb{F}_p)|,|Y_{\mathrm{low}}^{(3)}(\mathbb{F}_p)|,|Y_{\mathrm{low}}^{(4)}(\mathbb{F}_p)|,|Y_{\mathrm{low}}^{(5)}(\mathbb{F}_p)|\ll p^3$ and hence $|Y_{\mathrm{low}}(\mathbb{F}_p)|\ll p^3$. A similar decomposing and counting argument would show that $|Z_{\mathrm{low}}(\mathbb{F}_p)|\ll p^3$ too. This completes the proof.
\end{proof}

\section{Final Arguments}\label{sec: final}

In this section, we would put everything together to complete the proof of Theorem \ref{main}. After establishing Gowers norm control, we could immediately run the degree lowering arguments \cite{MR3934588}. In fact, since we obtain a $U^2$-Gowers norm control, we would perform a Fourier level set decomposition of the functions to run the degree lowering arguments more efficiently, as opposed to using the regularity lemma.

We first start with the case of two term rational function progressions, which serves as the base case of the induction. As observed by Peluse \cite{MR3934588}, one should consider the counting operator twisted by the characters for the purpose of induction. The proof of Proposition \ref{base} is essentially the same as \cite[Lemma 3.2]{MR3934588}, except that we apply Bombieri's bound instead of Weil's bound. Denote the characteristic function by $1_{\xi=0}$, that is, $1_{\xi=0}=1$ when $\xi=0$ and $1_{\xi=0}=0$ when $\xi\neq 0$.

\begin{prop}\label{base}
    Let $F(t), R(t)\in \mathbb{Q}(t)$ be rational functions over $\mathbb{Q}$ such that $F(t), R(t)$ and the constant function $1$ are linearly independent over $\mathbb{Q}$. Then, we have the asymptotic formula
    \[
        \mathbb{E}^*_{x,y} f_0(x)f_1(x+F(y))e_p(\xi R(y))=1_{\xi=0}\prod_{i=0}^1\mathbb{E}_z f_i(z)+O_{F,R}\bigl(p^{-1/2}\lVert f_0 \rVert_2 \lVert f_1 \rVert_2 \bigr)
    \]
    for all functions $f_0,f_1:\mathbb{F}_p\longrightarrow \mathbb{C}$ and $\xi \in \mathbb{F}_p$.
\end{prop}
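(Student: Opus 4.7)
The plan is to follow the Fourier-analytic template of Peluse's two-term argument \cite[Lemma 3.2]{MR3934588}, with Bombieri's bound for rational function exponential sums in place of Weil's polynomial bound. The proof is a single application of orthogonality in the $x$-aperture plus a uniform exponential sum estimate in the $y$-aperture.

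First I would Fourier expand $f_0$ and $f_1$, so that the left-hand side becomes
\begin{equation*}
\sum_{n_0, n_1} \widehat{f_0}(n_0) \widehat{f_1}(n_1) \, \mathbb{E}_x e_p((n_0+n_1)x) \, \mathbb{E}^*_y e_p(n_1 F(y) + \xi R(y)).
\end{equation*}
Orthogonality in $x$ forces $n_0 = -n_1$ and collapses this to $\sum_n \widehat{f_0}(-n)\widehat{f_1}(n) S(n,\xi)$, where I abbreviate $S(n,\xi) = \mathbb{E}^*_y e_p(n F(y) + \xi R(y))$. The contribution from $n=0$ is $\widehat{f_0}(0)\widehat{f_1}(0) S(0,\xi) = \bigl(\prod_{i=0}^1 \mathbb{E}_z f_i(z)\bigr) S(0,\xi)$, and $S(0,\xi) = \mathbb{E}^*_y e_p(\xi R(y))$ equals $1$ exactly when $\xi=0$, producing the claimed main term $1_{\xi=0}\prod_{i=0}^1 \mathbb{E}_z f_i(z)$.

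For every pair $(n,\xi) \neq (0,0)$, I claim $nF + \xi R$ is a nonconstant rational function, so that Bombieri's bound yields $|S(n,\xi)| \ll_{F,R} p^{-1/2}$ for all but finitely many $p$. Indeed, if $nF + \xi R$ equalled a constant $c$, then $nF(t) + \xi R(t) - c \equiv 0$ would be a nontrivial $\mathbb{Q}$-linear dependence among $F(t)$, $R(t)$, and $1$, violating the hypothesis; the finitely many primes where reduction mod $p$ degrades the phase or the degrees are absorbed into the $O_{F,R}$ constant. Applying this bound to the residual piece $\widehat{f_0}(0)\widehat{f_1}(0) S(0,\xi)$ when $\xi \neq 0$ (which is then $O(p^{-1/2}\lVert f_0 \rVert_2 \lVert f_1 \rVert_2)$ by Cauchy--Schwarz on the zero-frequency Fourier coefficients), and to the tail $\sum_{n \neq 0} \widehat{f_0}(-n)\widehat{f_1}(n) S(n,\xi)$ via Cauchy--Schwarz in $n$ followed by Parseval, produces a total error of $O_{F,R}(p^{-1/2}\lVert f_0 \rVert_2 \lVert f_1 \rVert_2)$, matching the stated bound.

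The only genuine obstacle is verifying the hypotheses of Bombieri's bound cleanly: nonconstancy of the phase after reduction mod $p$ and correct handling of the poles via the $\mathbb{E}^*$ convention. Both are routine given the $\mathbb{Q}$-linear independence of $F, R, 1$ and the standard practice of absorbing finitely many exceptional primes into the implied constant depending on $F, R$.
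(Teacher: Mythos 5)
Your proposal follows essentially the same route as the paper's proof: Fourier expansion of $f_0,f_1$, orthogonality in $x$ to collapse to $\sum_n \widehat{f_0}(-n)\widehat{f_1}(n)S(n,\xi)$, Bombieri's bound on $S(n,\xi)$ whenever $(n,\xi)\neq(0,0)$ via the linear independence of $F,R,1$, and Cauchy--Schwarz plus Parseval for the $n\neq 0$ tail. One small imprecision: $S(0,0)=\mathbb{E}^*_y 1$ is not exactly $1$ but $1-O(1/p)$, since $\mathbb{E}^*$ excludes the finitely many poles of $F$ and $R$; this discrepancy is harmlessly absorbed into the $O(p^{-1/2}\lVert f_0\rVert_2\lVert f_1\rVert_2)$ error term, exactly as in the paper.
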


\begin{proof}
    Fourier expand the functions $f_0,f_1$ to obtain
    \begin{equation}
        \begin{split}
            &\mathbb{E}^*_{x,y} f_0(x)f_1(x+F(y))e_p(\xi R(y))\\
            &=\mathbb{E}^*_{x,y} \sum_{n_0,n_1} \widehat{f_0}(n_0)\widehat{f_1}(n_1)e_p(n_0x+n_1x)e_p(n_1F(y)+\xi R(y))\\
            &=\sum_{n_1} \widehat{f_0}(-n_1)\widehat{f_1}(n_1)\mathbb{E}^*_{y} e_p(n_1F(y)+\xi R(y)).
        \end{split}
    \end{equation}
    
    By linear independence of $F,R$ and the constant function $1$, the function $n_1F(y)+\xi R(y)$ is nonconstant unless $n_1=\xi=0$. When $n_1\neq 0$ or $\xi\neq 0$, we have Bombieri's bound on single variable exponential sums in finite fields with rational function phases \cite{MR0200267} (also see \cite[Proposition 2.2]{MR3704938}), that is,
    \begin{equation}
        \mathbb{E}^*_{y} e_p(n_1F(y)+\xi R(y))\ll p^{-1/2}.
    \end{equation}
    
    Combining Bombieri's bound, the Cauchy--Schwarz inequality and Parseval's identity, we have
    \begin{equation}
        \sum_{n_1\neq 0} \widehat{f_0}(-n_1)\widehat{f_1}(n_1)\mathbb{E}^*_{y} e_p(n_1F(y)+\xi R(y))\ll p^{-1/2}\lVert f_0 \rVert_2 \lVert f_1 \rVert_2. 
    \end{equation}
    
    Note that the main term comes from the term with $n_1=0$, that is,
    \begin{equation}
        \widehat{f_0}(0)\widehat{f_1}(0)\mathbb{E}^*_{y} e_p(\xi R(y))=1_{\xi=0}\prod_{i=0}^1\mathbb{E}_z f_i(z)+O\bigl(p^{-1/2}\lVert f_0 \rVert_2 \lVert f_1 \rVert_2 \bigr).
    \end{equation}
    
    This completes the proof.
\end{proof}

Next, we show that the counting operator can be controlled by the dual function.

\begin{prop}\label{dual}
    Let $F(t),G(t)\in \mathbb{Q}(t)$ be rational functions over $\mathbb{Q}$ and $f_0,f_1,f_2:\mathbb{F}_p\longrightarrow \mathbb{C}$ be functions. Define the dual function $\mathcal{D}=\mathcal{D}_{f_1,f_2}$ by
    \[\mathcal{D}(x)=\mathbb{E}^*_y f_1(x+F(y))f_2(x+G(y)).\]
    Then, we have
    \[\lvert \mathbb{E}^*_{x,y} f_0(x)f_1(x+F(y))f_2(x+G(y)) \rvert \leq \lVert \widehat{f_0} \rVert_{l^1}\lVert \widehat{\mathcal{D}} \rVert_{l^{\infty}}.\]
\end{prop}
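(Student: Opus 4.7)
The plan is to recognize the trilinear count as an inner product against the dual function and then Fourier-invert the remaining factor. First, by Fubini the $y$-average (with poles excluded) at a fixed $x$ is precisely $\mathcal{D}(x)$, so pulling $f_0(x)$ out of the $y$-expectation yields
\[
\mathbb{E}^*_{x,y} f_0(x)f_1(x+F(y))f_2(x+G(y)) = \mathbb{E}_x f_0(x)\,\mathcal{D}(x).
\]

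Next, I would Fourier-expand $f_0$ using the paper's convention $\widehat{f}(\xi)=\mathbb{E}_x f(x)e_p(-x\xi)$, i.e.\ $f_0(x)=\sum_{\xi}\widehat{f_0}(\xi)e_p(\xi x)$, and interchange the finite sum with the $x$-expectation. Since $\mathbb{E}_x \mathcal{D}(x)e_p(\xi x)=\widehat{\mathcal{D}}(-\xi)$, this gives
\[
\mathbb{E}_x f_0(x)\,\mathcal{D}(x) = \sum_\xi \widehat{f_0}(\xi)\,\widehat{\mathcal{D}}(-\xi).
\]

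Finally, the trivial $l^1$--$l^\infty$ Hölder bound on the Fourier side yields
\[
\Bigl|\sum_\xi \widehat{f_0}(\xi)\,\widehat{\mathcal{D}}(-\xi)\Bigr| \leq \Bigl(\sum_\xi |\widehat{f_0}(\xi)|\Bigr)\max_\xi |\widehat{\mathcal{D}}(-\xi)| = \lVert \widehat{f_0}\rVert_{l^1}\,\lVert \widehat{\mathcal{D}}\rVert_{l^\infty},
\]
which is exactly the claimed estimate.

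There is no real obstacle here; this is a routine packaging step. Its purpose is structural, transferring the task of controlling the trilinear counting operator to that of controlling $\lVert \widehat{\mathcal{D}}\rVert_{l^\infty}$, which in the subsequent argument will be bounded by combining the $U^2$-Gowers norm control from Theorem \ref{PET} with the dimension estimate of Theorem \ref{dim}.
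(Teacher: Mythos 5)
Your proof is correct and is essentially identical to the paper's: recognize the trilinear average as $\mathbb{E}_x f_0(x)\mathcal{D}(x)$, pass to the Fourier side via Parseval, and apply the $l^1$--$l^\infty$ H\"older bound. One small contextual slip in your closing remark: in the paper, $\lVert\widehat{\mathcal{D}}\rVert_{l^\infty}$ is subsequently bounded via Proposition \ref{dual2}, which rests on the two-term base case (Bombieri's bound) in Proposition \ref{base}, not on Theorems \ref{PET} and \ref{dim}; those theorems are used to control the complementary $\check{h}$ piece in the Fourier level set decomposition.
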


\begin{proof}
    By the definition of dual function and Parseval's identity, we have
    \begin{equation}
        \lvert \mathbb{E}^*_{x,y} f_0(x)f_1(x+F(y))f_2(x+G(y)) \rvert =\lvert \mathbb{E}_{x} f_0(x)\mathcal{D}(x)\rvert=\Bigl\lvert \sum_{m} \widehat{f_0}(-m)\widehat{\mathcal{D}}(m)\Bigr\rvert,
    \end{equation}
    which is bounded by $\lVert \widehat{f_0} \rVert_{l^1}\lVert \widehat{\mathcal{D}} \rVert_{l^{\infty}}$ by applying H\"{o}lder's inequality.
\end{proof}

The following proposition says that the dual function is controlled by shorter progressions.

\begin{prop}\label{dual2}
    Let $F(t),G(t)\in \mathbb{Q}(t)$ be rational functions over $\mathbb{Q}$ such that $F(t),G(t)$ and the constant function $1$ are linearly independent over $\mathbb{Q}$, and $f_1,f_2:\mathbb{F}_p\longrightarrow \mathbb{C}$ be functions with $\mathbb{E}_z f_2(z)=0$. Define the dual function $\mathcal{D}=\mathcal{D}_{f_1,f_2}$ by
    \[\mathcal{D}(x)=\mathbb{E}^*_y f_1(x+F(y))f_2(x+G(y)).\]
    Then, we have
    \[\lVert \widehat{\mathcal{D}} \rVert_{l^{\infty}} \ll p^{-1/2}\lVert f_1 \rVert_2\lVert f_2 \rVert_2.\]
\end{prop}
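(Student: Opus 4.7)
The plan is to reduce the $l^\infty$ bound on $\widehat{\mathcal{D}}$ to the single-variable exponential sum estimates underlying Proposition \ref{base}. The main computation is a Fourier expansion of $f_1$ and $f_2$ inside the definition of $\mathcal{D}$: after substituting Fourier inversion and evaluating the resulting average $\mathbb{E}_x e_p((n_1 + n_2 - m)x)$ by orthogonality of characters, the constraint $n_1 + n_2 = m$ collapses the double sum and yields
\begin{equation*}
    \widehat{\mathcal{D}}(m) = \sum_{n} \widehat{f_1}(n)\,\widehat{f_2}(m-n)\,\mathbb{E}^*_y e_p\bigl(n F(y) + (m-n) G(y)\bigr).
\end{equation*}

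Next I would analyze when the phase $nF(y) + (m-n)G(y)$ is a constant rational function. Writing it as $n(F - G) + mG$ (or directly as an element of the $\mathbb{Q}$-span of $F, G, 1$), linear independence of $F$, $G$ and $1$ over $\mathbb{Q}$ forces this to happen only when $n = 0$ and $m - n = 0$, i.e., only for the single pair $(n,m) = (0,0)$. For every other pair, Bombieri's bound as invoked in Proposition \ref{base} gives $\mathbb{E}^*_y e_p(nF(y) + (m-n)G(y)) \ll p^{-1/2}$.

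If $m \neq 0$, every inner exponential sum is $O(p^{-1/2})$, so by Cauchy--Schwarz and Parseval
\begin{equation*}
    |\widehat{\mathcal{D}}(m)| \ll p^{-1/2} \sum_n |\widehat{f_1}(n)\widehat{f_2}(m-n)| \leq p^{-1/2} \lVert \widehat{f_1} \rVert_{l^2} \lVert \widehat{f_2} \rVert_{l^2} = p^{-1/2} \lVert f_1 \rVert_2 \lVert f_2 \rVert_2.
\end{equation*}
If $m = 0$, the potentially dangerous $n = 0$ term has exponential sum $\mathbb{E}^*_y 1 = 1 + O(p^{-1})$, which is not small. This is precisely where the hypothesis $\mathbb{E}_z f_2(z) = 0$ enters: it forces $\widehat{f_2}(0) = 0$, so that $n = 0$ term vanishes identically. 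The remaining $n \neq 0$ terms again satisfy Bombieri's bound, and the same Cauchy--Schwarz/Parseval argument concludes.

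The only delicate point is matching the mean-zero hypothesis on $f_2$ to exactly the $(n,m) = (0,0)$ obstruction; linear independence of $F, G, 1$ (rather than merely of $F, G$) is what guarantees this is the \emph{only} such obstruction. Otherwise the proof is a direct Fourier-analytic computation backed by Bombieri's bound, and does not require the algebraic-geometric machinery developed in Sections \ref{sec: PET} and \ref{sec: dim}.
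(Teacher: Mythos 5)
Your proof is correct and rests on the same ingredients as the paper's: Fourier expansion, orthogonality of characters, Bombieri's bound for single-variable rational exponential sums, linear independence of $F,G,1$ to identify $(n,m)=(0,0)$ as the only constant phase, the hypothesis $\widehat{f_2}(0)=0$ to eliminate it, and Cauchy--Schwarz/Parseval to close. The only structural difference is that the paper performs the change of variables $x\mapsto x-F(y)$ to write $\widehat{\mathcal{D}}(\xi)=\mathbb{E}^*_{x,y}\bigl(f_1(x)e_p(-x\xi)\bigr)f_2\bigl(x+(G-F)(y)\bigr)e_p(\xi F(y))$ and then cites Proposition~\ref{base} directly (with the rational functions $G-F$ and $F$, which span the same space as $F,G$), whereas you have inlined that Fourier computation from scratch --- logically equivalent and equally valid.
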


\begin{proof}
    Note that 
    \begin{equation}
    \begin{split}
        \widehat{\mathcal{D}}(\xi)=&\mathbb{E}_{x}\mathbb{E}^*_{y} f_1(x+F(y))f_2(x+G(y))e_p(-x\xi)\\
        =& \mathbb{E}^*_{x,y} f_1(x)e_p(-x\xi)f_2(x+G(y)-F(y))e_p(\xi F(y)).
    \end{split}
    \end{equation}
    To finish the proof, apply Proposition \ref{base} and note that the main term is $0$ due to the assumption $\mathbb{E}_z f_2(z)=0$.
\end{proof}

Finally, we have all ingredients to finish the proof of Theorem \ref{main}.

\begin{proof}[Proof of Theorem \ref{main}]
We will use convenient notation $\Lambda_{F,G}(f_0,f_1,f_2)$ to denote the counting operator $\mathbb{E}^*_{x,y} f_0(x)f_1(x+F(y))f_2(x+G(y))$. First, we start with a preliminary reduction. Let $f_2'=f_2-\mathbb{E}_z f_2(z)$ be the balanced version of the function $f_2$, so that the mean of $f_2'$ is $0$. Note that we have
\begin{equation}
    \begin{split}
        \Lambda_{F,G}(f_0,f_1,f_2)&=\Lambda_{F,G}(f_0,f_1,\mathbb{E}_z f_2(z))+\Lambda_{F,G}(f_0,f_1,f_2')\\
        &=\Bigl(\mathbb{E}_z f_2(z)\Bigr)\mathbb{E}^*_{x,y}f_0(x)f_1(x+F(y))+\Lambda_{F,G}(f_0,f_1,f_2')\\
        &=\prod_{i=0}^2\mathbb{E}_z f_i(z)+O\bigl(p^{-1/2}\lVert f_0 \rVert_2 \lVert f_1 \rVert_2 \lVert f_2 \rVert_2\bigr)+\Lambda_{F,G}(f_0,f_1,f_2'),
    \end{split}
\end{equation}
where the last equality follows from Proposition \ref{base}. It remains to bound $\Lambda_{F,G}(f_0,f_1,f_2')$.

Next, we would construct a Fourier level set decomposition of $f_0$. Let $\varepsilon>0$ be a parameter to be chosen later, define
\begin{equation}
    g(\xi)=\begin{cases}
        \widehat{f_0}(\xi), & \lvert\widehat{f_0}(\xi)\rvert>\frac{\lVert f_0 \rVert_2}{\varepsilon p^{1/2}}\\
        0, & \text{otherwise}
    \end{cases}
\end{equation}
and
\begin{equation}
    h(\xi)=\begin{cases}
        \widehat{f_0}(\xi), & \lvert\widehat{f_0}(\xi)\rvert\leq\frac{\lVert f_0 \rVert_2}{\varepsilon p^{1/2}}\\
        0, & \text{otherwise}
    \end{cases}.
\end{equation}

Note that we have $\widehat{f_0}=g+h$. By Parseval's identity, we have $l^2$-bounds $\lVert g \rVert_{l^2}\leq \lVert f_0 \rVert_{2}$ and $\lVert h \rVert_{l^2}\leq \lVert f_0 \rVert_{2}$. Moreover, we have good $l^1$-bound for $g$, that is,
\begin{equation}
    \lVert g \rVert_{l^1}=\sum_{\xi} \lvert g(\xi) \rvert \leq \sum_{\xi} \lvert g(\xi) \rvert \cdot \lvert \widehat{f_0}(\xi) \rvert \biggl(\frac{\lVert f_0 \rVert_2}{\varepsilon p^{1/2}}\biggr)^{-1}\leq \varepsilon p^{1/2} \lVert f_0 \rVert_2.
\end{equation}

Furthermore, we have good $l^4$-bound for $h$, that is,
\begin{equation}
    \lVert h \rVert_{l^4}\leq \lVert h \rVert_{l^{\infty}}^{1/2}\lVert h \rVert_{l^2}^{1/2}\leq \biggl(\frac{\lVert f_0 \rVert_2}{\varepsilon p^{1/2}}\biggr)^{1/2}\lVert h \rVert_{l^2}^{1/2}\leq \varepsilon^{-1/2}p^{-1/4}\lVert f_0 \rVert_{2}.
\end{equation}

Decompose $f_0=\check{g}+\check{h}$, where $\check{g},\check{h}$ are the inverse Fourier transform of $g,h$ respectively.

To finish the proof, we would use dual function control to bound $\Lambda_{F,G}(\check{g},f_1,f_2')$ and algebraic geometry PET induction to bound $\Lambda_{F,G}(\check{h},f_1,f_2')$. By Proposition \ref{dual}, Proposition \ref{dual2} and the $l^1$-bound for $g$, we have
\begin{equation}
    \lvert\Lambda_{F,G}(\check{g},f_1,f_2')\rvert\leq \lVert g \rVert_{l^1}\lVert \widehat{\mathcal{D}} \rVert_{l^{\infty}}\ll p^{-1/2}\lVert g \rVert_{l^1}\lVert f_1 \rVert_2\lVert f_2 \rVert_2\leq \varepsilon\lVert f_0 \rVert_2\lVert f_1 \rVert_2\lVert f_2 \rVert_2.
\end{equation}

By Theorem \ref{PET}, Theorem \ref{dim} and the $l^4$-bound for $h$, we have
\begin{equation}
    \lvert\Lambda_{F,G}(\check{h},f_1,f_2')\rvert\ll \lVert \check{h} \rVert_2^{1/2} \lVert h \rVert_{l^4}^{1/2} \lVert f_1 \rVert_2 \lVert f_2 \rVert_2\leq \varepsilon^{-1/4}p^{-1/8}\lVert f_0 \rVert_2\lVert f_1 \rVert_2\lVert f_2 \rVert_2.
\end{equation}

Hence, we have
\begin{equation}
\begin{split}
    \Lambda_{F,G}(f_0,f_1,f_2')&=\Lambda_{F,G}(\check{g},f_1,f_2')+\Lambda_{F,G}(\check{h},f_1,f_2')\\
    & \ll \varepsilon\lVert f_0 \rVert_2\lVert f_1 \rVert_2\lVert f_2 \rVert_2+\varepsilon^{-1/4}p^{-1/8}\lVert f_0 \rVert_2\lVert f_1 \rVert_2\lVert f_2 \rVert_2. 
\end{split}
\end{equation}

Choosing $\varepsilon=p^{-1/10}$ will optimize the result and complete the proof.
\end{proof}

%\bibliography{ref}
\printbibliography %Prints bibliography

\end{document}